   \numberwithin{equation}{section}
\journal{ } % ÐÞ¸ÄÔÓÖ¾Ãû
\newtheorem{thm}{Theorem}[section]
\newtheorem{lem}[thm]{Lemma}
\newtheorem{defn}[thm]{Definition}
\newtheorem{rem}[thm]{Remark}
\begin{document}
\begin{frontmatter}
\author{Hongfeng Li}
\ead{lihf728@nenu.edu.cn}
\author{Yong Wang\corref{cor2}}
\ead{wangy581@nenu.edu.cn}
\cortext[cor2]{Corresponding author.}

\address{School of Mathematics and Statistics, Northeast Normal University,
Changchun, 130024, China}

\title{Spectral (0,4)-tensor functionals and the noncommutative residue}
\begin{abstract}
In this paper, we derive some spectral (0,4)-tensor functionals by four one-forms and the Dirac operator and the noncommutative residue on even-dimensional compact spin manifolds without boundary. Then, we extend these spectral (0,4)-tensor functionals to a general spectral triple.
\end{abstract}
\begin{keyword} Spectral (0,4)-tensor functionals; the Dirac operator; noncommutative residue.

\end{keyword}
\end{frontmatter}
\textit{Mathematics Subject Classification:}
53C40; 53C42.
\section{Introduction}
 Until now, many geometers have studied the noncommutative residue, which is of great importance to the study of noncommutative geometry. Connes showed that the noncommutative residue on a compact manifold $M$ coincided with the Dixmier's trace on pseudo-differential operators of order $-{\rm {dim}}M$ \cite{Co2}. Hence, the noncommutative residue can be used as integral of noncommutative geometry and become an important tool of noncommutative geometry. In \cite{Co1}, Connes used the noncommutative residue to derive a conformal 4-dimensional Polyakov action analogy.
Connes made a challenging observation that the noncommutative residue of the square of the inverse of the Dirac operator was proportional to the Einstein-Hilbert action, which is called the Kastler-Kalau-Walze type theorem. Kastler \cite{Ka} gave a brute-force proof of this theorem. In \cite{KW}, Kalau and Walze proved this theorem in the normal coordinates system simultaneously. Based on the theory of the noncommutative residue proposed by Wodzicki, Fedosov, et al. \cite{FGLS} constructed the noncommutative residue of the classical elemental algebra of the Boutte de Monvel calculus on compact manifolds of dimension $n>2$.

Using elliptic pseudo-differential operators and the noncommutative residue is a natural way to study the spectral Einstein functional and the operator-theoretic interpretation of the gravitational action on bounded manifolds. Concerning the Dirac operator and the signature operator, Wang carried out the computation of noncommutative residues and succeeded in proving the Kastler-Kalau-Walze type theorem for manifolds with boundary \cite{Wa1, Wa3, Wa4}. Figueroa, et al. \cite{FGV2} introduced a noncommutative integral based on the noncommutative residue. In \cite{DL}, Dabrowski, Sitarz and Zalecki defined the spectral Einstein functional for a general spectral triple and for the noncommutative torus, they computed the spectral Einstein functional.
In \cite{WWw}, Wang, et al. gave some new spectral functionals which are the extension of spectral functionals to the noncommutative realm with torsion, and related them to the noncommutative residue for manifolds with boundary about Dirac operators with torsion.
In \cite{DL2}, Dabrowski, et al. examined the metric and Einstein bilinear functionals of differential forms for the Hodge-Dirac operator $d+d^*$ on an oriented, closed, even-dimensional Riemannian manifold. Wu and Wang computed the spectral Einstein functional for the Witten deformation $d+d^*+\widehat{c}(V)$ on even-dimensional Riemannian manifolds without boundary \cite{Wu2}. In \cite{Li1}, Li and Wang computed the spectral Einstein functional associated with the nonminimal de Rham-Hodge operator on even-dimensional compact manifolds without boundary. They mainly computed the noncommutative residue $\mathrm{Wres}\Big(\widetilde{c}(u)\widetilde{c}(v)\widetilde{D}^{-2m+2}\Big)$ and $\mathrm{Wres}\Big(\widetilde{c}(u)\widetilde{D}\widetilde{c}(v)\widetilde{D}\widetilde{D}^{-2m}\Big)$. In \cite{B2}, the definition of spectral tensor type functionals were posed. We know that the spectral functionals in \cite{DL} are spectral (0,2)-tensor functionals. So it is interesting to find some spectral (0,4)-tensor functionals by the Dirac operator and the noncommutative residue.
{\bf The motivation of this paper} is to compute the noncommutative residue $\mathrm{Wres}\Big(c(u_1)c(u_2)c(u_3)c(u_4)D^{-2m+2}\Big)$ and $\mathrm{Wres}\Big(c(u_1)c(u_2)Dc(u_3)c(u_4)DD^{-2m}\Big)$ for the Dirac operator on even-dimensional Riemannian manifolds without boundary and get some interesting spectral (0,4)-tensor functionals. Our main theorem is as follows.
 \begin{thm}\label{1thm}
 	Let $M$ be an $n=2m$ dimensional ($n\geq 3$) Riemannian manifold, for the Dirac operator $D$, the following equalities hold
 \begin{align}
 	\mathscr{P}_{D}:=&\mathrm{Wres}\Big(c(u_1)c(u_2)c(u_3)c(u_4)D^{-2m+2}\Big)\nonumber\\
 =&-2^{m} \frac{2 \pi^{m}}{\Gamma\left(m\right)}\int_{M} \bigg\{\frac{m-1}{12}s\Big(g(u_1,u_2)g(u_3,u_4)-g(u_1,u_3)g(u_2,u_4)+g(u_1,u_4)g(u_2,u_3)\Big)\bigg\}d{\rm Vol}_M;\nonumber\\
 	\mathscr{Q}_{D}:=&\mathrm{Wres}\Big(c(u_1)c(u_2)Dc(u_3)c(u_4)DD^{-2m}\Big)\nonumber\\
=&2^{m} \frac{2 \pi^{m}}{\Gamma\left(m\right)}\int_{M}\frac{1}{24}\bigg\{2(m+1)sg(u_1,u_2)g(u_3,u_4)-2(m+2)s\Big(g(u_1,u_3)g(u_2,u_4)-g(u_1,u_4)g(u_2,u_3)\Big)\nonumber\\
&+12g(u_1,u_2){\rm Ric}(u_3,u_4)-3g(u_3,u_4){\rm Ric}(u_1,u_2)+g(u_1,u_3){\rm Ric}(u_2,u_4)-5g(u_2,u_4){\rm Ric}(u_1,u_3)\nonumber\\
&+17g(u_1,u_4){\rm Ric}(u_2,u_3)+2g(u_2,u_3){\rm Ric}(u_1,u_4)\bigg\}d{\rm Vol}_M,\nonumber
 \end{align}
 where  $g(u_i,u_j)=\sum_{a=1}^{n}u_{a}^i u_{a}^j $ and $c(u_i)=\sum_{a=1}^{n} u_a^i c(e_a)$ $(i,j=1,2,3,4)$.
 \end{thm}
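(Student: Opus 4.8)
The plan is to evaluate both noncommutative residues through the local Wodzicki formula
\[
\mathrm{Wres}(P)=\int_{M}\Big(\int_{|\xi|=1}\mathrm{tr}\big[\sigma_{-n}(P)(x,\xi)\big]\,dS(\xi)\Big)\,d\mathrm{Vol}_M,
\]
where $\sigma_{-n}(P)$ is the term of the complete symbol of $P$ that is homogeneous of degree $-n$ in $\xi$. I would fix a point $x_0\in M$ and work in geodesic normal coordinates with a synchronous orthonormal frame, so that at $x_0$ the metric is Euclidean, its first derivatives vanish, and curvature enters only through second derivatives of the metric and through derivatives of the spin connection. For $\mathscr{P}_D$ the four Clifford factors assemble into the single zeroth-order, $\xi$-independent matrix $c(u_1)c(u_2)c(u_3)c(u_4)$; since it carries no $\xi$-dependence, the composition series collapses and $\sigma\big(c(u_1)c(u_2)c(u_3)c(u_4)D^{-2m+2}\big)=c(u_1)c(u_2)c(u_3)c(u_4)\,\sigma(D^{-2m+2})$. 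Thus the problem reduces to extracting the three top homogeneous components, of degrees $-2m+2,-2m+1,-2m$, of the symbol of $D^{-2m+2}$.

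First I would record the symbol of the Dirac operator, $\sigma(D)=\sqrt{-1}\,c(\xi)+\sigma_0(D)$, where the subleading part $\sigma_0(D)$ is built from the spin-connection coefficients and vanishes at $x_0$ while its first derivatives do not. Writing $D^{2}$ via the Lichnerowicz formula, whose leading symbol is $|\xi|^{2}$, I would compute the symbol of the parametrix recursively from $D^{2}\circ D^{-2}=\mathrm{Id}$, obtaining $\sigma_{-2}(D^{-2})=|\xi|^{-2}$ together with the two subleading terms carrying Christoffel- and curvature-data. The powers $D^{-2m+2}=(D^{-2})^{m-1}$ and $D^{-2m}=(D^{-2})^{m}$ are then reached either by iterating the composition formula or, more cleanly, by the resolvent representation $(D^{2})^{-\ell}=\frac{1}{2\pi\sqrt{-1}}\oint \lambda^{-\ell}(D^{2}-\lambda)^{-1}\,d\lambda$, treating $\lambda$ as an order-two quantity and performing the contour integral at the end; this keeps the $m$-dependence explicit and produces the factors $m-1$, $m+1$, $m+2$ appearing in the statement.

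For $\mathscr{Q}_D$ the reduction is less benign because of the two interior copies of $D$. I would regard the operator as the triple composition $\big[c(u_1)c(u_2)D\big]\circ\big[c(u_3)c(u_4)D\big]\circ D^{-2m}$, record the order-one symbols $c(u_1)c(u_2)\big(\sqrt{-1}\,c(\xi)+\sigma_0(D)\big)$ and $c(u_3)c(u_4)\big(\sqrt{-1}\,c(\xi)+\sigma_0(D)\big)$, and apply the composition formula $\sigma(AB)\sim\sum_\alpha\frac{1}{\alpha!}\partial_\xi^\alpha\sigma(A)\,D_x^\alpha\sigma(B)$ twice, retaining every term that contributes to degree $-2m$ (so $D^{-2m}$ is needed down to degree $-2m-2$). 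Here the correction terms $\partial_\xi\sigma\cdot D_x\sigma$ are essential: the $\xi$-differentiation of $c(\xi)$ against the $x$-differentiation of the lower symbols is exactly what manufactures the Ricci-tensor contributions and the asymmetric coefficients $12,-3,1,-5,17,2$ in $\mathscr{Q}_D$, which are absent from the purely scalar $\mathscr{P}_D$.

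In both cases the final stage is algebraic. After collecting $\sigma_{-n}$ I would take the fibrewise trace using $\mathrm{tr}(\mathrm{Id})=2^m$, $\mathrm{tr}\big(c(e_i)c(e_j)\big)=-2^m\delta_{ij}$, $\mathrm{tr}\big(c(e_i)c(e_j)c(e_k)c(e_l)\big)=2^m(\delta_{ij}\delta_{kl}-\delta_{ik}\delta_{jl}+\delta_{il}\delta_{jk})$ and the analogous six-factor identity, then integrate the resulting $\xi$-monomials over $S^{n-1}$ via $\int_{S^{n-1}}\xi_{i_1}\cdots\xi_{i_{2p}}\,dS=\frac{\mathrm{Vol}(S^{n-1})}{n(n+2)\cdots(n+2p-2)}\sum_{\text{pairings}}\delta\cdots\delta$. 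The contractions $\sum_a u_a^i u_a^j$ reproduce the $g(u_i,u_j)$ factors, the curvature data organize into $s$ and $\mathrm{Ric}$, and the sphere volume supplies the common prefactor $\mathrm{Vol}(S^{n-1})=\frac{2\pi^{m}}{\Gamma(m)}$. \textbf{The main obstacle} I anticipate is the bookkeeping in $\mathscr{Q}_D$: tracking the curvature terms generated by the two $D$-insertions through the noncommuting Clifford algebra and the composition corrections, and then disentangling them into the six distinct $g\cdot\mathrm{Ric}$ channels with their precise coefficients, is where errors are most likely and where the bulk of the work lies.
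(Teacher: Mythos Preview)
Your proposal is correct and follows essentially the same route as the paper: for $\mathscr{P}_D$ you use that the Clifford prefactor is $\xi$-independent so the composition collapses, and for $\mathscr{Q}_D$ you factor the operator as $[c(u_1)c(u_2)D]\circ[c(u_3)c(u_4)D]\circ D^{-2m}$ and run the pseudodifferential composition formula twice---exactly as the paper does with $\mathcal{A}=c(u_1)c(u_2)D$, $\mathcal{B}=c(u_3)c(u_4)D$. The only notable difference is that the paper imports the three top homogeneous components of $\sigma(D^{-2m})$ (and of $\sigma(D^{-2m+2})$) from a lemma of Dabrowski--Sitarz--Zalecki rather than deriving them via the parametrix recursion or resolvent integral you describe; either path yields the same symbol data, so this is a matter of presentation rather than strategy.
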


The paper is organized in the following way. In Section \ref{section:2}, we introduce the definition of the Dirac operator and the symbols of the generalized laplacian for the Dirac operator. In Section \ref{section:3}, using the residue for a differential operator of Laplace type ${\rm Wres}(P):=\int_{S^*M}{\rm tr}(\sigma_{-n}^P)(x,\xi)$ and the composition formula of pseudo-differential operators, we obtain above spectral (0,4)-tensor functionals in Theorem \ref{1thm} on even-dimensional Riemannian manifolds without boundary.

\section{The Dirac operator and its symbols}
\label{section:2}
Firstly, we recall the definition of the Dirac operator. Let $M$ be an $n$-dimensional ($n\geq 3$) oriented compact spin Riemannian manifold with a Riemannian metric $g^{M}$.\\
\indent Let $\nabla^L$ be the Levi-Civita connection about $g^{M}$. In the
fixed orthonormal frame $\{e_1,\cdots,e_n\}$, the connection matrix $(\omega_{s,t})$ is defined by
\begin{equation}
\label{eq1}
\nabla^L(e_1,\cdots,e_n)= (e_1,\cdots,e_n)(\omega_{s,t}).\nonumber
\end{equation}
\indent Let $c(e_i)$ denotes the Clifford action, which satisfies
\begin{align}
\label{a4}
&c(e_i)c(e_j)+c(e_j)c(e_i)=-2g^{M}(e_i,e_j).
\end{align}

By \cite{Y}, the Dirac operator is given by
\begin{equation}
D=\sum^n_{i=1}c(e_i)\Big[e_i
-\frac{1}{4}\sum_{s,t}\omega_{s,t}(e_i)c(e_s)c(e_t)\Big].
\end{equation}

For a differential operator of Laplace type $P$, it has locally the form
\begin{equation}\label{p}
	P=-(g^{ij}\partial_i\partial_j+A^i\partial_i+B),
\end{equation}
where $\partial_{i}$  is a natural local frame on $TM,$ $(g^{ij})_{1\leq i,j\leq n}$ is the inverse matrix associated to the metric
matrix  $(g_{ij})_{1\leq i,j\leq n}$ on $M,$ $A^{i}$ and $B$ are smooth sections of $\textrm{End}(N)$ on $M$ (endomorphism).
If $P$ satisfies the form \eqref{p}, then there is a unique
connection $\nabla$ on $N$ and a unique endomorphism $\widetilde{E}$ such that
\begin{equation}
	P=-[g^{ij}(\nabla_{\partial_{i}}\nabla_{\partial_{j}}- \nabla_{\nabla^{L}_{\partial_{i}}\partial_{j}})+\widetilde{E}].
\end{equation}

Moreover
(with local frames of $T^{*}M$ and $N$), $\nabla_{\partial_{i}}=\partial_{i}+\omega_{i} $
and $\widetilde{E}$ are related to $g^{ij}$, $A^{i}$ and $B$ through
\begin{eqnarray}
	&&\omega_{i}=\frac{1}{2}g_{ij}\big(A^{i}+g^{kl}\Gamma_{ kl}^{j} \texttt{id}\big);\nonumber\\
	&&\widetilde{E}=B-g^{ij}\big(\partial_{i}(\omega_{j})+\omega_{i}\omega_{j}-\omega_{k}\Gamma_{ ij}^{k} \big),\nonumber
\end{eqnarray}
where $\Gamma_{ kl}^{j}$ is the  Christoffel coefficient of $\nabla^{L}$.

From \cite{DL}, we get
\begin{align} \label{dt2}
D^{2}=-g^{ab}(\nabla_{{\partial}_a}\nabla_{{\partial}_b}-\nabla_{{\nabla_{{\partial}_a}^{L}}\partial_b})+E,\nonumber
\end{align}
where $E=-\widetilde{E}, \nabla_{{\partial}_a}={\partial}_a-\widetilde{T}_a$.

Recall the definition of the Dirac operator $D^{2}$ in \cite{Ka}, \cite{KW} and \cite{Wa4}, we have
\begin{equation}
D^{2}=-\sum_{ij}g^{ij}\Big[\partial_{i}\partial_{j}+2\sigma_{i}\partial_{j}+(\partial_{i}\sigma_{j})+\sigma_{i}\sigma_{j}
    -\Gamma_{i,j}^{k}\partial_{k}-\Gamma_{i,j}^{k}\sigma_{k}\Big]+\frac{1}{4}s,
\end{equation}
where $\sigma_{i}:=-\frac{1}{4}\sum_{s,t}\omega_{s,t}(e_i)c(e_s)c(e_t)$.

Define $\omega_{s,t}
(e_i)=-\langle \nabla_{e_i}^{L}e_{s}, e_{t}\rangle $, we get
\begin{align} \label{ta1}
	\widetilde{T}_a&=-\frac{1}{4}\sum_{s, t=1}^{n}\langle \nabla_{{\partial}_a}^{L}e_{s}, e_{t}\rangle c(e_{s})c(e_{t});\nonumber\\
E&=\frac{1}{4}s.
\end{align}

In normal coordinates, $\widetilde{T}_a$ is expanded near $x=0$ by Taylor expansion. That is
\begin{align}
	\widetilde{T}_a={T}_a+{T}_{ab}x^b+O(x^2).\nonumber
\end{align}

By ${\partial}_{l}\langle \nabla_{{\partial}_a}^{L}e_{s}, e_{t}\rangle(x_0)=\frac{1}{2}{\rm R}_{lats}(x_0)$, we get
\begin{align} \label{at0}
	{T}_a=0,
\end{align}
and
\begin{align} \label{ta0}
	{T}_{ab}=-\frac{1}{8}\sum_{a,b,s,t=1}^{2m}R_{bats}(x_0) c(e_{s})c(e_{t}).
\end{align}

 \begin{lem}\label{lem1}\cite{DL}
	The leading symbols of the generalized laplacian $\Delta_{T,E}^{-m}$ are as follows:
	\begin{align}
		\sigma_{-2 m}(\Delta_{T,E}^{-m})=&\|\xi\|^{-2 m-2}\sum_{a,b,j,k=1}^{2m}\left(\delta_{a b}-\frac{m}{3} R_{a j b k} x^{j} x^{k}\right) \xi_{a} \xi_{b}+O\left(\mathbf{x}^{3}\right) ;\nonumber\\
		\sigma_{-2m-1}(\Delta_{T,E}^{-m})=& \frac{-2 m i}{3}\|\xi\|^{-2 m-2} \sum_{a,k=1}^{2m}\operatorname{Ric}_{a k} x^{k} \xi_{a}-2 m i\|\xi\|^{-2 m-2}\sum_{a,b=1}^{2m}\left(T_{a} \xi_{a}+T_{a b} x^{b} \xi_{a}\right)+O(\mathbf{x^2}) ;\nonumber\\
		\sigma_{-2m-2}(\Delta_{T,E}^{-m})=& \frac{m(m+1)}{3}\|\xi\|^{-2 m-4} \sum_{a,b=1}^{2m}\operatorname{Ric}_{a b} \xi_{a} \xi_{b}\nonumber \\
		&-2 m(m+1)\|\xi\|^{-2 m-4}\sum_{a,b=1}^{2m} T_{a} T_{b} \xi_{a} \xi_{b}+m\sum_{a,b=1}^{2m}\left(T_{a} T_{a}-T_{a a}\right)\|\xi\|^{-2 m-2}\nonumber \\
		&+2 m(m+1)\|\xi\|^{-2 m-4} \sum_{a,b=1}^{2m}T_{a b} \xi_{a} \xi_{b}-mE \|\xi\|^{-2m-2}+O(\mathbf{x}) ,\nonumber
	\end{align}
where $Ric$ denotes Ricci curvature.
\end{lem}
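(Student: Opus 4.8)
The plan is to obtain the three symbols by first computing the complete symbol of the Laplace-type operator $\Delta_{T,E}$, then inverting its resolvent order by order, and finally recovering $\Delta_{T,E}^{-m}$ through a Cauchy contour integral in the resolvent parameter. Throughout I would work in the normal coordinates centered at $x_0$ introduced above, where the only inputs needed are the metric expansion $g^{ab}(x)=\delta_{ab}+\frac13\sum_{j,k}R_{ajbk}x^jx^k+O(\mathbf{x}^3)$ (in the curvature-sign convention for which this holds), the Taylor expansion $\widetilde{T}_a=T_a+T_{ab}x^b+O(\mathbf{x}^2)$ with $T_a$ and $T_{ab}$ given by \eqref{at0} and \eqref{ta0}, and $E=\frac14 s$ from \eqref{ta1}.

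First I would read off the complete symbol $p=p_2+p_1+p_0$ of $\Delta_{T,E}=-g^{ab}(\nabla_{\partial_a}\nabla_{\partial_b}-\nabla^{L}_{\nabla_{\partial_a}\partial_b})+E$ directly from its definition, using $\nabla_{\partial_a}=\partial_a-\widetilde{T}_a$. The degree-two part is $p_2=\sum_{a,b}g^{ab}(x)\xi_a\xi_b$, and its $m$th inverse power already yields the stated $\sigma_{-2m}$: since $p_2=\|\xi\|^2+\frac13 R_{ajbk}x^jx^k\xi_a\xi_b+O(\mathbf{x}^3)$, a binomial expansion gives $p_2^{-m}=\|\xi\|^{-2m}-\frac{m}{3}\|\xi\|^{-2m-2}R_{ajbk}x^jx^k\xi_a\xi_b+O(\mathbf{x}^3)$, which is precisely $\sigma_{-2m}(\Delta_{T,E}^{-m})$. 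The first-order part $p_1$ collects the terms linear in $\xi$ coming from $\widetilde{T}_a$ and the Christoffel symbols, while $p_0$ collects the remaining endomorphism-valued terms, in particular the contributions of $E$ and of $T_{ab}$.

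Next I would compute the symbol $r=r_{-2}+r_{-3}+r_{-4}+\cdots$ of the resolvent $(\Delta_{T,E}-\lambda)^{-1}$ by solving the symbol identity $\sum_\alpha\frac{(-i)^{|\alpha|}}{\alpha!}\partial_\xi^\alpha(p-\lambda)\,\partial_x^\alpha r=1$ order by order, treating $\lambda$ as a variable of weight two. This gives $r_{-2}=(p_2-\lambda)^{-1}$, then $r_{-3}$ from the degree $-1$ equation $(p_2-\lambda)r_{-3}=-p_1 r_{-2}+i\sum_k\partial_{\xi_k}p_2\,\partial_{x^k}r_{-2}$, and $r_{-4}$ from the degree $-2$ equation, which involves $p_0$, the second $\xi$-derivatives of $p_2$, and the already computed $r_{-2},r_{-3}$. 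The required symbols are then recovered from
\[
\sigma_{-2m-\ell}(\Delta_{T,E}^{-m})=\frac{1}{2\pi i}\oint_\Gamma \lambda^{-m}\,r_{-2-\ell}(x,\xi,\lambda)\,d\lambda,\qquad \ell=0,1,2,
\]
where $\Gamma$ encircles the spectrum; the $\lambda$-integrals of powers $(p_2-\lambda)^{-k}$ produce the homogeneous factors $\|\xi\|^{-2m-2\ell}$ together with the polynomial coefficients in $m$ such as $m$, $m(m+1)$, and $\frac{m(m+1)}{3}$ recorded in Lemma \ref{lem1}.

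Finally, I would evaluate at $x_0$, insert $T_a=0$, the expression \eqref{ta0} for $T_{ab}$, and $E=\frac14 s$, and contract indices: the quadratic metric correction feeds the $\operatorname{Ric}_{ab}\xi_a\xi_b$ and $\operatorname{Ric}_{ak}x^k\xi_a$ terms, while the $T_aT_b$, $T_{ab}$ and $E$ terms supply the remaining endomorphism contributions. I expect the main obstacle to be the degree $-2m-2$ symbol, which mixes the second-order metric expansion, the bilinear terms $T_aT_b$ and $T_{ab}\xi_a\xi_b$, and the endomorphism $E$; keeping track of every index contraction and of every combinatorial coefficient in $m$ through both the resolvent recursion and the contour integration is where the bookkeeping is most delicate. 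The other two symbols follow by the same scheme with strictly less computation.
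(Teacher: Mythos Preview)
Your approach is correct and is the standard resolvent/Cauchy-integral method for extracting the symbols of complex powers of a Laplace-type operator; the recursion for $r_{-2},r_{-3},r_{-4}$ and the contour integral $\sigma_{-2m-\ell}=\tfrac{1}{2\pi i}\oint\lambda^{-m}r_{-2-\ell}\,d\lambda$ produce exactly the coefficients $m$, $m(m+1)$, $\tfrac{m(m+1)}{3}$ appearing in the statement, and your handling of the leading symbol $p_2^{-m}$ via the binomial expansion of the metric in normal coordinates is right.

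Note, however, that the paper does \emph{not} give its own proof of this lemma: it is quoted verbatim from \cite{DL} (Dabrowski--Sitarz--Zalecki) and used as a black box. The argument in \cite{DL} is precisely the one you outline---compute the symbol of $\Delta_{T,E}$ in normal coordinates, build the parametrix of the resolvent order by order, and recover the negative power by a Cauchy integral---so your proposal matches the original source rather than anything in the present paper. There is therefore nothing to compare at the level of this paper, but your sketch would reproduce the cited result with no missing ingredients; the only caution is the one you already flag, namely the bookkeeping in $\sigma_{-2m-2}$, where the cross-terms from $\partial_\xi^2 p_2\cdot\partial_x^2 r_{-2}$ and from $p_1 r_{-3}$ must both be tracked to recover the $\tfrac{m(m+1)}{3}\operatorname{Ric}_{ab}\xi_a\xi_b$ and $2m(m+1)T_{ab}\xi_a\xi_b$ pieces with the correct signs.
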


By \eqref{ta1}-\eqref{ta0} and Lemma \ref{lem1}, we get the following lemma.

\begin{lem}\label{lem2}\cite{DL}
	General dimensional symbols of the Dirac operator are given:
	\begin{align}
		\sigma_{-2 m}(D^{-2m})=&\|\xi\|^{-2 m-2}\sum_{a,b,j,k=1}^{2m} \left(\delta_{a b}-\frac{m}{3} R_{a j b k} x^{j} x^{k}\right) \xi_{a} \xi_{b}+O\left(\mathbf{x}^{3}\right); \nonumber\\
		\sigma_{-2m-1}(D^{-2m})=& -\frac{2 mi}{3}\|\xi\|^{-2 m-2}\sum_{a,b=1}^{2m}  \operatorname{Ric}_{a b} x^{b} \xi_{a}\nonumber\\
		&+\frac{ m i}{4}\|\xi\|^{-2 m-2} \sum_{a,b,s,t=1}^{2m} \operatorname{R}_{b a t s}(x_0) c(e_s) c(e_t) x^{b} \xi_{a}
		+O\left(\mathbf{x}^{2}\right);\nonumber\\
		\sigma_{-2m-2}(D^{-2m})=& \frac{m(m+1)}{3}\|\xi\|^{-2 m-4}\sum_{a,b=1}^{2m} \operatorname{Ric}_{a b} \xi_{a} \xi_{b} \nonumber\\
		&-\frac{ m (m+1)}{4}\|\xi\|^{-2 m-4} \sum_{a,b,s,t=1}^{2m}\operatorname{R}_{b a t s}(x_0) c(e_s) c(e_t) \xi_{a} \xi_{b}\nonumber\\
&-\frac{m}{4} \|\xi\|^{-2 m-2}s+O\left(\mathbf{x}\right),\nonumber
	\end{align}
where $i$ in $mi$ represents the imaginary unit, and $i$ in other subscript positions represents the $i$-th. 
\end{lem}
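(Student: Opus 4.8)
The plan is to observe that the square of the Dirac operator, $D^2$, is itself a generalized Laplacian of the type $\Delta_{T,E}$ treated in Lemma \ref{lem1}, so that $D^{-2m}=(D^2)^{-m}=\Delta_{T,E}^{-m}$ once the structural data $T_a$, $T_{ab}$ and $E$ are read off from the explicit form of $D^2$. The whole proof then reduces to substituting these Dirac-specific quantities into the three general symbol formulas of Lemma \ref{lem1} and collecting terms. Accordingly, I would first record the data supplied by \eqref{ta1}, \eqref{at0} and \eqref{ta0}: namely $T_a=0$, $E=\frac14 s$, and $T_{ab}=-\frac18\sum_{s,t}R_{bats}(x_0)c(e_s)c(e_t)$.

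For the leading symbol $\sigma_{-2m}(D^{-2m})$ there is nothing to substitute: the first formula of Lemma \ref{lem1} depends only on the metric and the Riemann tensor through the factor $\delta_{ab}-\frac{m}{3}R_{ajbk}x^jx^k$ and contains none of $T_a$, $T_{ab}$, $E$, so it transfers verbatim. For $\sigma_{-2m-1}(D^{-2m})$, the contribution $-2mi\|\xi\|^{-2m-2}\sum_a T_a\xi_a$ is killed by $T_a=0$, leaving the Ricci term intact together with $-2mi\|\xi\|^{-2m-2}\sum_{a,b}T_{ab}x^b\xi_a$. Inserting $T_{ab}$ and simplifying the scalar factor $(-2mi)\cdot(-\tfrac18)=\tfrac{mi}{4}$ reproduces exactly the claimed curvature--Clifford term $\frac{mi}{4}\|\xi\|^{-2m-2}\sum R_{bats}(x_0)c(e_s)c(e_t)x^b\xi_a$.

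The step I expect to require the most care is $\sigma_{-2m-2}(D^{-2m})$. Putting $T_a=0$ annihilates both the $\sum T_aT_b\xi_a\xi_b$ term and the $\sum T_aT_a$ part of the $\|\xi\|^{-2m-2}$ coefficient, so that only the Ricci term, the $\sum T_{ab}\xi_a\xi_b$ term, the $-mT_{aa}$ term and the $-mE$ term remain. Here I would verify the key cancellation $\sum_a T_{aa}=0$: since $T_{aa}=-\frac18\sum_{s,t}R_{aats}(x_0)c(e_s)c(e_t)$ and the Riemann tensor is antisymmetric in its first pair of indices, $R_{aats}=0$ for every fixed $a$, so the $-mT_{aa}$ contribution disappears. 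Substituting $T_{ab}$ into the surviving quadratic term yields the factor $2m(m+1)\cdot(-\tfrac18)=-\frac{m(m+1)}{4}$, giving the stated curvature--Clifford term, while $E=\frac14 s$ converts $-mE\|\xi\|^{-2m-2}$ into $-\frac{m}{4}s\|\xi\|^{-2m-2}$. The only real obstacle is thus careful bookkeeping: keeping the index placement in $R_{bats}$ consistent with the antisymmetry conventions and tracking the numerical factors $-\tfrac18$, $-2mi$ and $2m(m+1)$ through each substitution so that all signs and coefficients agree with the asserted expansions.
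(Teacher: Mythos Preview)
Your proposal is correct and follows exactly the paper's approach: the paper simply states that Lemma~\ref{lem2} follows ``by \eqref{ta1}--\eqref{ta0} and Lemma~\ref{lem1},'' i.e.\ by substituting the Dirac-specific data $T_a=0$, $E=\tfrac14 s$ and $T_{ab}=-\tfrac18\sum_{s,t}R_{bats}(x_0)c(e_s)c(e_t)$ into the general symbol formulas of Lemma~\ref{lem1}. Your write-up is a faithful (and more detailed) unpacking of that one-line derivation, including the observation $\sum_a T_{aa}=0$ from the antisymmetry $R_{aats}=0$.
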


 \section{Spectral (0,4)-tensor functionals and the noncommutative residue}
 \label{section:3}
In this section, we want to obtain some spectral (0,4)-tensor functionals by the noncommutative residue for the Dirac operator.

For a pseudo-differential operator  $P$, acting on sections of a vector bundle over an $n$-dimensional compact Riemannian manifold $M$ ($n$ is even), the analogue of the volume element in noncommutative geometry is the operator  $D^{-n}=: d s^{n} $. And pertinent operators are realized as pseudo-differential operators on the spaces of sections. Extending previous definitions by Connes \cite{co5}, a noncommutative integral was introduced in \cite{FGV2} based on the noncommutative residue \cite{wo2}, combine (1.4) in \cite{co4} and \cite{Ka}, using the definition of the residue:
\begin{align}\label{wers}
	\int P d s^{n}:=\operatorname{Wres} P D^{-n}:=\int_{S^{*} M} \operatorname{tr}\left[\sigma_{-n}\left(P D^{-n}\right)\right](x, \xi),
\end{align}
where  $\sigma_{-n}\left(P D^{-n}\right) $ denotes the  $(-n)$th order piece of the complete symbols of  $P D^{-n} $,  $\operatorname{tr}$  as shorthand of trace.

Firstly, we review here technical tool of the computation, which are the integrals of polynomial functions over the unit spheres. By (32) in \cite{B1}, we define
\begin{align}
I_{S_n}^{\gamma_1\cdot\cdot\cdot\gamma_{2\bar{n}+2}}=\int_{|x|=1}d^nxx^{\gamma_1}\cdot\cdot\cdot x^{\gamma_{2\bar{n}+2}},
\end{align}
i.e. the monomial integrals over a unit sphere.
Then by Proposition A.2. in \cite{B1},  polynomial integrals over higher spheres in the $n$-dimesional case are given
\begin{align}
I_{S_n}^{\gamma_1\cdot\cdot\cdot\gamma_{2\bar{n}+2}}=\frac{1}{2\bar{n}+n}[\delta^{\gamma_1\gamma_2}I_{S_n}^{\gamma_3\cdot\cdot\cdot\gamma_{2\bar{n}+2}}+\cdot\cdot\cdot+\delta^{\gamma_1\gamma_{2\bar{n}+1}}I_{S_n}^{\gamma_2\cdot\cdot\cdot\gamma_{2\bar{n}+1}}],
\end{align}
where $S_n\equiv S^{n-1}$ in $\mathbb{R}^n$.

For $\bar{n}=0$, we have $I^0={\rm Vol}(S^{n-1})$=$\frac{2\pi^{\frac{n}{2}}}{\Gamma(\frac{n}{2})}$, we immediately get
\begin{align}
I_{S_n}^{\gamma_1\gamma_2}&=\frac{1}{n}{\rm Vol}(S^{n-1})\delta^{\gamma_1\gamma_2};\nonumber\\
I_{S_n}^{\gamma_1\gamma_2\gamma_3\gamma_4}&=\frac{1}{n(n+2)}{\rm Vol}(S^{n-1})[\delta^{\gamma_1\gamma_2}\delta^{\gamma_3\gamma_4}+\delta^{\gamma_1\gamma_3}\delta^{\gamma_2\gamma_4}+\delta^{\gamma_1\gamma_4}\delta^{\gamma_2\gamma_3}].
\end{align}

\begin{thm}\label{thm}
 	Let $M$ be an $n=2m$ dimensional ($n\geq 3$) Riemannian manifold, for the Dirac operator $D$, the following equalities hold
 \begin{align}
 	\mathscr{P}_{D}:=&\mathrm{Wres}\Big(c(u_1)c(u_2)c(u_3)c(u_4)D^{-2m+2}\Big)\nonumber\\
 =&-2^{m} \frac{2 \pi^{m}}{\Gamma\left(m\right)}\int_{M} \bigg\{\frac{m-1}{12}s\Big(g(u_1,u_2)g(u_3,u_4)-g(u_1,u_3)g(u_2,u_4)+g(u_1,u_4)g(u_2,u_3)\Big)\bigg\}d{\rm Vol}_M;\nonumber\\
 	\mathscr{Q}_{D}:=&\mathrm{Wres}\Big(c(u_1)c(u_2)Dc(u_3)c(u_4)DD^{-2m}\Big)\nonumber\\
=&2^{m} \frac{2 \pi^{m}}{\Gamma\left(m\right)}\int_{M}\frac{1}{24}\bigg\{2(m+1)sg(u_1,u_2)g(u_3,u_4)-2(m+2)s\Big(g(u_1,u_3)g(u_2,u_4)-g(u_1,u_4)g(u_2,u_3)\Big)\nonumber\\
&+12g(u_1,u_2){\rm Ric}(u_3,u_4)-3g(u_3,u_4){\rm Ric}(u_1,u_2)+g(u_1,u_3){\rm Ric}(u_2,u_4)-5g(u_2,u_4){\rm Ric}(u_1,u_3)\nonumber\\
&+17g(u_1,u_4){\rm Ric}(u_2,u_3)+2g(u_2,u_3){\rm Ric}(u_1,u_4)\bigg\}d{\rm Vol}_M,\nonumber
 \end{align}
where  $g(u_i,u_j)=\sum_{a=1}^{n}u_{a}^i u_{a}^j $ and $c(u_i)=\sum_{a=1}^{n} u_a^i c(e_a)$ $(i,j=1,2,3,4)$.
 \end{thm}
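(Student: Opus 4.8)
The plan is to evaluate both functionals through the residue formula \eqref{wers}, which writes each as $\int_{S^*M}\mathrm{tr}\big(\sigma_{-n}(P)\big)(x,\xi)$ with $n=2m$, and to compute the integrand pointwise in normal coordinates centred at an arbitrary $x_0\in M$, where the symbol expansions of Lemma \ref{lem2} and the connection data \eqref{at0}--\eqref{ta0} are available. The single recurring device is that each $c(u_i)$ is a zeroth-order multiplication operator with $\xi$-independent symbol, so left-composition by it introduces no correction terms in the symbol calculus: $\sigma\big(c(u_i)\,Q\big)=c(u_i)\,\sigma(Q)$. The overall factor $2^m$ in both answers will enter through the spinor-bundle rank via $\mathrm{tr}(\mathrm{Id})=2^m$, and $\mathrm{Vol}(S^{n-1})=\frac{2\pi^m}{\Gamma(m)}$ through the sphere integrals.

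For $\mathscr{P}_D$ the operator $c(u_1)c(u_2)c(u_3)c(u_4)D^{-2m+2}$ has order $-2m+2$, so by the previous remark $\sigma_{-2m}(P)=c(u_1)c(u_2)c(u_3)c(u_4)\,\sigma_{-2m}(D^{-2m+2})$; only the symbol of $D^{-2m+2}=(D^2)^{-(m-1)}$ two orders below its leading term is needed, and I would read it off from the $\sigma_{-2m-2}(D^{-2m})$ line of Lemma \ref{lem2} by replacing the exponent $m$ with $m-1$. This furnishes a scalar term $\propto s$, a Ricci term $\propto \mathrm{Ric}_{ab}\xi_a\xi_b$, and a curvature term $\propto R_{bats}c(e_s)c(e_t)\xi_a\xi_b$. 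Restricting to $\|\xi\|=1$ and taking the fibre trace, the scalar and Ricci pieces produce $\mathrm{tr}\big(c(u_1)c(u_2)c(u_3)c(u_4)\big)=2^m\big(g(u_1,u_2)g(u_3,u_4)-g(u_1,u_3)g(u_2,u_4)+g(u_1,u_4)g(u_2,u_3)\big)$, while the curvature piece gives a six-factor Clifford trace. Integrating $\xi_a\xi_b$ over the sphere with $I_{S_n}^{\gamma_1\gamma_2}$ contracts the curvature term into $R_{aats}$, which vanishes by the antisymmetry of the curvature tensor in its first pair of indices; the surviving Ricci and scalar contributions combine through $\mathrm{Ric}_{aa}=s$ into the coefficient $-\tfrac{m-1}{12}$, giving the stated identity.

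For $\mathscr{Q}_D$ the computation is far heavier. Writing the operator as $c(u_1)c(u_2)\,D\,c(u_3)c(u_4)\,D\,D^{-2m}$, again of order $-2m+2$, I would build its full symbol by iterating the composition formula $\sigma(P_1P_2)\sim\sum_\alpha\frac{(-i)^{|\alpha|}}{\alpha!}\partial_\xi^\alpha\sigma(P_1)\,\partial_x^\alpha\sigma(P_2)$ from right to left, using $\sigma(D)=ic(\xi)+p_0$ with $c(\xi)=\sum_a\xi_a c(e_a)$ and $p_0$ the spin-connection part (which vanishes at $x_0$ but whose $x$-derivative produces $T_{ab}$ via \eqref{ta0}), together with the three components of $\sigma(D^{-2m})$ from Lemma \ref{lem2}. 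Extracting the order $-2m$ part then collects several groups of terms: the principal parts $ic(\xi)$ of both $D$-factors paired with $\sigma_{-2m-2}(D^{-2m})$; one principal and one connection factor paired with $\sigma_{-2m-1}(D^{-2m})$; and the first-order $\partial_\xi\partial_x$ corrections of the composition formula acting on the $x$-dependent parts of the $D^{-2m}$ symbols. Here a genuine subtlety is that $\sigma_{-2m-1}(D^{-2m})$ and the $x^jx^k$-part of $\sigma_{-2m}(D^{-2m})$ vanish at $x_0$, so they contribute only after being struck by a $\partial_x$ from a correction term; keeping track of exactly which derivatives survive at $x_0$ is essential. Taking the fibre trace then produces Clifford traces of six generators (from $c(u_1)c(u_2)$, $c(u_3)c(u_4)$ and the two $c(\xi)$) and of eight generators (when the curvature $c(e_s)c(e_t)$ of $\sigma_{-2m-2}$ or a $T_{ab}$ appears), which I would expand by the standard signed-pairing rule, and the sphere integrals $I_{S_n}^{\gamma_1\gamma_2}$ and $I_{S_n}^{\gamma_1\gamma_2\gamma_3\gamma_4}$ then reduce everything to $s$- and $\mathrm{Ric}$-contractions of the $g(u_i,u_j)$.

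The main obstacle is precisely this last stage for $\mathscr{Q}_D$. Because the two Dirac factors are separated by $c(u_3)c(u_4)$ and the Clifford generators do not commute, the six- and eight-factor traces do not symmetrise in the $u_i$, and it is exactly this lack of symmetry that yields the asymmetric Ricci coefficients $12,-3,1,-5,17,2$ of the final formula. Organising the many curvature contributions without sign or ordering errors — separating those from $\partial_x^2$ acting on $\sigma_{-2m}(D^{-2m})$, from $T_{ab}$, and from $\sigma_{-2m-1},\sigma_{-2m-2}$ — and then performing the fourth-order sphere integral with the correct index contractions, is where essentially all the work lies; by contrast the scalar-curvature part should assemble into the coefficients $2(m+1)$ and $-2(m+2)$ by the same mechanism already seen in $\mathscr{P}_D$.
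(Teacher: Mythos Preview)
Your plan is correct and essentially identical to the paper's proof: for $\mathscr{P}_D$ the paper does exactly what you describe, and for $\mathscr{Q}_D$ it carries out the same symbol-calculus computation in normal coordinates, with the only organisational difference being that the paper groups the operator as $\mathcal{A}\mathcal{B}D^{-2m}$ with $\mathcal{A}=c(u_1)c(u_2)D$ and $\mathcal{B}=c(u_3)c(u_4)D$, first computes $\sigma_0,\sigma_1,\sigma_2$ of $\mathcal{A}\mathcal{B}$, and then expands $\sigma_{-2m}(\mathcal{A}\mathcal{B}D^{-2m})$ into six pieces rather than iterating right to left. All the ingredients you identify --- the vanishing of the $R_{aats}$ contraction, the role of $T_{ab}$ (equivalently $\partial_x$ of the spin connection) in producing the non-symmetric eight-factor Clifford trace, the necessity of hitting $\sigma_{-2m-1}$ and the $x^jx^k$-part of $\sigma_{-2m}$ with $\partial_x$ corrections, and the use of $I_{S_n}^{\gamma_1\gamma_2}$ and $I_{S_n}^{\gamma_1\cdots\gamma_4}$ --- appear in the paper's computation exactly as you anticipate.
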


\begin{proof}

{\bf Part I)} $\mathscr{P}_{D}=\mathrm{Wres}\Big(c(u_1)c(u_2)c(u_3)c(u_4)D^{-2m+2}\Big)$.

By \eqref{wers}, we need to compute  $\int_{S^* M} \operatorname{tr}\left[\sigma_{-2 m}\Big(c(u_1)c(u_2)c(u_3)c(u_4)D^{-2m+2}\Big)\right](x, \xi) $. Based on the algorithm yielding the principal symbol of a product of pseudo-differential operators in terms of the principal symbols of the factors,  by Lemma \ref{lem2}, we have
\begin{align}
	\sigma_{-2m}(D^{-2 m+2})=& \frac{m(m-1)}{3}\|\xi\|^{-2 m-2}\sum_{a,b=1}^{2m} \operatorname{Ric}_{a b} \xi_{a} \xi_{b}\nonumber\\
		&-\frac{ m (m-1)}{4}\|\xi\|^{-2 m-2} \sum_{a,b,s,t=1}^{2m}\operatorname{R}_{b a t s} c(e_s) c(e_t) \xi_{a} \xi_{b}\nonumber\\
		&-\frac{m-1}{4} \|\xi\|^{-2 m}s+O\left(\mathbf{x}\right).
\end{align}
Obviously, the general dimensional symbols of the $c(u_1)c(u_2)c(u_3)c(u_4)D^{-2m+2}$ are given:
\begin{align}\label{PD}
	&\sigma_{-2 m}\Big(c(u_1)c(u_2)c(u_3)c(u_4)D^{-2m+2}\Big)\nonumber\\
 =& \frac{m(m-1)}{3}\|\xi\|^{-2 m-2}\sum_{a,b=1}^{2m} \operatorname{Ric}_{a b} \xi_{a} \xi_{b}c(u_1)c(u_2)c(u_3)c(u_4) \nonumber\\
		&-\frac{ m (m-1)}{4}\|\xi\|^{-2 m-2} \sum_{a,b,s,t=1}^{2m}\operatorname{R}_{b a t s}\xi_{a} \xi_{b} c(u_1)c(u_2)c(u_3)c(u_4)c(e_s) c(e_t) \nonumber\\
		&-\frac{m-1}{4} \|\xi\|^{-2 m}sc(u_1)c(u_2)c(u_3)c(u_4)+O\left(\mathbf{x}\right).
\end{align}
Next, we integrate each of the above items respectively.

\noindent {\bf (I-1)}
\begin{align}
	&\int_{\|\xi\|=1}\operatorname{tr} \biggl\{\frac{m(m-1)}{3}\|\xi\|^{-2 m-2}\sum_{a,b=1}^{2m} \operatorname{Ric}_{a b} \xi_{a} \xi_{b}c(u_1)c(u_2)c(u_3)c(u_4)\biggr\}(x_0)\sigma(\xi)\nonumber\\
	=&\frac{m(m-1)}{3}\delta_a^b \operatorname{Ric}_{a b}{\rm tr}[c(u_1)c(u_2)c(u_3)c(u_4)]{\rm Vol}(S^{n-1}).
\end{align}
By calculation, we get
\begin{align}\label{a1}
{\rm tr}[c(u_1)c(u_2)c(u_3)c(u_4)]=\Big(g(u_1,u_2)g(u_3,u_4)-g(u_1,u_3)g(u_2,u_4)+g(u_1,u_4)g(u_2,u_3)\Big){\rm tr}[id],
\end{align}
so
\begin{align}
	&\int_{\|\xi\|=1}\operatorname{tr} \biggl\{\frac{m(m-1)}{3}\|\xi\|^{-2 m-2}\sum_{a,b=1}^{2m} \operatorname{Ric}_{a b} \xi_{a} \xi_{b}c(u_1)c(u_2)c(u_3)c(u_4)\biggr\}(x_0)\sigma(\xi)\nonumber\\
	=&\frac{m(m-1)}{3}\delta_a^b \operatorname{Ric}_{a b}{\rm tr}[c(u_1)c(u_2)c(u_3)c(u_4)]{\rm Vol}(S^{n-1})\nonumber\\
=&\frac{m(m-1)}{3}\times\frac{1}{2m} \operatorname{Ric}_{a a}{\rm tr}[c(u_1)c(u_2)c(u_3)c(u_4)]{\rm Vol}(S^{n-1})\nonumber\\
=&\frac{m-1}{6}s\Big(g(u_1,u_2)g(u_3,u_4)-g(u_1,u_3)g(u_2,u_4)+g(u_1,u_4)g(u_2,u_3)\Big){\rm Vol}(S^{n-1}){\rm tr}[id].
\end{align}
\noindent {\bf (I-2)}
\begin{align}
	&\int_{\|\xi\|=1} {\rm tr}\biggl\{-\frac{ m (m-1)}{4}\|\xi\|^{-2 m-2} \sum_{a,b,s,t=1}^{2m}\operatorname{R}_{b a t s}\xi_{a} \xi_{b}c(u_1)c(u_2)c(u_3)c(u_4) c(e_s) c(e_t) \biggr\}(x_0)\sigma(\xi)\nonumber\\
	=&-\frac{ m-1}{8}\sum_{a,s,t=1}^{2m}\operatorname{R}_{a a t s}{\rm tr}\big[c(u_1)c(u_2)c(u_3)c(u_4) c(e_s) c(e_t)\big]{\rm Vol}(S^{n-1})  \nonumber\\
	=&0.
\end{align}
\noindent {\bf (I-3)}
\begin{align}
	&\int_{\|\xi\|=1} {\rm tr}\biggl\{-\frac{m-1}{4} \|\xi\|^{-2 m}sc(u_1)c(u_2)c(u_3)c(u_4)\biggr\}(x_0)\sigma(\xi)\nonumber\\
	=&-\frac{m-1}{4}s{\rm tr}\big[c(u_1)c(u_2)c(u_3)c(u_4)\big]{\rm Vol}(S^{n-1})\nonumber\\
=&-\frac{m-1}{4}s\Big(g(u_1,u_2)g(u_3,u_4)-g(u_1,u_3)g(u_2,u_4)+g(u_1,u_4)g(u_2,u_3)\Big){\rm Vol}(S^{n-1}){\rm tr}[id].
\end{align}
In summary, we have
\begin{align}\label{zpdt}
	&\int_{\|\xi\|=1} {\rm tr}\biggl\{\sigma_{-2 m}\Big(c(u_1)c(u_2)c(u_3)c(u_4)  D^{-2 m+2}\Big)\biggr\}(x_0)\sigma(\xi)\nonumber\\
	=&-\frac{m-1}{12}s\Big(g(u_1,u_2)g(u_3,u_4)-g(u_1,u_3)g(u_2,u_4)+g(u_1,u_4)g(u_2,u_3)\Big){\rm Vol}(S^{n-1}){\rm tr}[id].
\end{align}
Since ${\rm tr}[id]=2^{m}$ and ${\rm Vol}(S^{n-1})=\frac{2 \pi^{m}}{\Gamma\left(m\right)}$, we obtain
\begin{align}\label{z11}
	\mathscr{P}_{D}=&\mathrm{Wres}\Big(c(u_1)c(u_2)c(u_3)c(u_4)D^{-2m+2}\Big)\nonumber\\
	=&-2^{m} \frac{2 \pi^{m}}{\Gamma\left(m\right)}\int_{M} \bigg\{\frac{m-1}{12}s\Big(g(u_1,u_2)g(u_3,u_4)-g(u_1,u_3)g(u_2,u_4)+g(u_1,u_4)g(u_2,u_3)\Big)\bigg\}d{\rm Vol}_M.
\end{align}

{\bf Part II)} $\mathscr{Q}_{D}=\mathrm{Wres}\Big(c(u_1)c(u_2)Dc(u_3)c(u_4)DD^{-2m}\Big)$.

Let $c(u_1)c(u_2)D:=\mathcal{A},c(u_3)c(u_4)D:=\mathcal{B} .$
By \eqref{wers}, we need to compute  $\int_{S^* M} \operatorname{tr}\big[\sigma_{-2 m}(\mathcal{A} \mathcal{B} D
^{-2 m})\big](x, \xi) $. Based on the algorithm yielding the principal symbol of a product of pseudo-differential operators in terms of the principal symbols of the factors, we have
\begin{align}\label{ABD}
	&\sigma_{-2 m}\left(\mathcal{A} \mathcal{B} D^{-2 m}\right)  \nonumber\\
     =&\left\{\sum_{|\alpha|=0}^{\infty} \frac{(-i)^{|\alpha|}}{\alpha!} \partial_{\xi}^{\alpha}\big[\sigma(\mathcal{A} \mathcal{B})\big] \partial_{x}^{\alpha}\big[\sigma(D^{-2 m})\big]\right\}_{-2 m} \nonumber\\
	 =&\sigma_{0}(\mathcal{A} \mathcal{B}) \sigma_{-2 m}(D^{-2 m})+\sigma_{1}(\mathcal{A} \mathcal{B}) \sigma_{-2 m-1}(D^{-2 m})+\sigma_{2}(\mathcal{A} \mathcal{B}) \sigma_{-2 m-2}(D^{-2 m}) \nonumber\\
	& +(-i) \sum_{j=1}^{2m} \partial_{\xi_{j}}\big[\sigma_{2}(\mathcal{A} \mathcal{B})\big] \partial_{x_{j}}\big[\sigma_{-2 m-1}(D^{-2 m})\big]+(-i) \sum_{j=1}^{2m} \partial_{\xi_{j}}\big[\sigma_{1}(\mathcal{A} \mathcal{B})\big] \partial_{x_{j}}\big[\sigma_{-2 m}(D^{-2 m})\big] \nonumber\\
	& -\frac{1}{2} \sum_{j,l=1}^{2m} \partial_{\xi_{j}} \partial_{\xi_{l}}\big[\sigma_{2}(\mathcal{A} \mathcal{B})\big] \partial_{x_{j}} \partial_{x_{l}}\big[\sigma_{-2 m}(D^{-2 m})\big] .
\end{align}

 \begin{lem}
The symbols of  $\mathcal{A}$  and  $\mathcal{B}$  are given:
\begin{align}
&\sigma_{1}(\mathcal{A})=\sqrt{-1}c(u_1)c(u_2)c(\xi); \nonumber\\
	&\sigma_{1}(\mathcal{B})=\sqrt{-1}c(u_3)c(u_4)c(\xi);\nonumber\\
&\sigma_{0}(\mathcal{A})=-\frac{1}{4} \sum_{p,s,t=1}^{2m} w_{st}(e_p)c(u_1)c(u_2)c(e_p)c(e_s)c(e_t);\nonumber\\
	&\sigma_{0}(\mathcal{B})=-\frac{1}{4} \sum_{p,s,t=1}^{2m} w_{st}(e_p)c(u_3)c(u_4)c(e_p)c(e_s)c(e_t).\nonumber
\end{align}
 \end{lem}

Further, by the composition formula of pseudo-differential operators, we get the following lemma.

 \begin{lem}
	The symbols of $\mathcal{AB}$ are given:
	\begin{align}
		\sigma_{0}(\mathcal{AB})=&\sigma_{0}(\mathcal{A}) \sigma_{0}(\mathcal{B})+(-i) \partial_{\xi_{j}}\left[\sigma_{1}(\mathcal{A})\right] \partial_{x_{j}}\left[\sigma_{0}(\mathcal{B})\right]+(-i) \partial_{\xi_{j}}\left[\sigma_{0}(\mathcal{A})\right] \partial_{x_{j}}\left[\sigma_{1}(\mathcal{B})\right]\nonumber\\
		=&\frac{1}{16}\sum_{p,s,t,\hat{p},\hat{s},\hat{t}=1}^{2m} w_{st}(e_p) w_{\hat{s} \hat{t}}(e_{\hat{p}})c(u_1)c(u_2)c(e_p)c(e_s)c(e_t)c(u_3)c(u_4)c(e_{\hat{p}})c(e_{\hat{s}})c(e_{\hat{t}})\nonumber\\
&+\frac{1}{8}\sum_{j,\hat{p},\hat{s},\hat{t}=1}^{2m} {\operatorname{R}}_{j\hat{p}\hat{t}\hat{s}}c(u_1)c(u_2)c(dx_j)c(u_3)c(u_4)c(e_{\hat{p}})c(e_{\hat{s}})c(e_{\hat{t}})\nonumber\\
&-\frac{1}{4}\sum_{j,\hat{p},\hat{s},\hat{t},\gamma=1}^{2m}w_{\hat{s} \hat{t}}(e_{\hat{p}})\partial {x_j}(u_{\gamma}^3)c(u_1)c(u_2)c(dx_j)c(e_\gamma)c(u_4)c(e_{\hat{p}})c(e_{\hat{s}})c(e_{\hat{t}})\nonumber\\
&-\frac{1}{4}\sum_{j,\hat{p},\hat{s},\hat{t},{\hat{\gamma}}=1}^{2m}w_{\hat{s} \hat{t}}(e_{\hat{p}})\partial {x_j}(u_{\hat{\gamma}}^4)c(u_1)c(u_2)c(dx_j)c(u_3)c(e_{\hat{\gamma}})c(e_{\hat{p}})c(e_{\hat{s}})c(e_{\hat{t}});\nonumber\\
		\sigma_{1}(\mathcal{AB})=&\sigma_{1}(\mathcal{A}) \sigma_{0}(\mathcal{B})+\sigma_{0}(\mathcal{A}) \sigma_{1}(\mathcal{B})+(-i) \partial_{\xi_{j}}\left[\sigma_{1}(\mathcal{A})\right] \partial_{x_{j}}\left[\sigma_{1}(\mathcal{B})\right] \nonumber\\
		=&-\frac{i}{4}\sum_{p,s,t=1}^{2m} w_{st}(e_p) c(u_1)c(u_2)c(\xi)c(u_3)c(u_4)c(e_p)c(e_s)c(e_t)\nonumber\\
&-\frac{i}{4}\sum_{p,s,t=1}^{2m} w_{st}(e_p) c(u_1)c(u_2)c(e_p)c(e_s)c(e_t)c(u_3)c(u_4)c(\xi)\nonumber\\
		&+i\sum_{j,\gamma=1}^{2m}\partial x_j(u_{\gamma}^3)c(u_1)c(u_2)c(dx_j)c(e_\gamma)c(u_4)c(\xi)\nonumber\\
        &+i\sum_{j,{\widehat{\gamma}}=1}^{2m}\partial x_j(u_{\widehat{\gamma}}^4)c(u_1)c(u_2)c(dx_j)c(u_3)c(e_{\widehat{\gamma}})c(\xi);\nonumber\\
		\sigma_{2}(\mathcal{AB})=&\sigma_{1}(\mathcal{A})\sigma_{1}(\mathcal{B})=-c(u_1)c(u_2)c(\xi)c(u_3)c(u_4)c(\xi).\nonumber
	\end{align}
\end{lem}

\noindent {\bf (II-1)} For $\sigma_{0}(\mathcal{AB}) \sigma_{-2 m}(D^{-2 m})(x_{0})$:
\begin{align}\label{0-2m}
\sigma_{0}(\mathcal{AB}) \sigma_{-2 m}(D^{-2 m})(x_{0})
=&\frac{1}{8}\|\xi\|^{-2 m}\sum_{j,\hat{p},\hat{s},\hat{t}=1}^{2m}  {\operatorname{R}}_{j \hat{p} \hat{t} \hat{s}}c(u_1)c(u_2)c(dx_j)c(u_3)c(u_4)c(e_{\hat{p}})c(e_{\hat{s}})c(e_{\hat{t}}).
\end{align}
So
\begin{align}
	&\int_{\|\xi\|=1} \operatorname{tr}\big[\sigma_{0}(\mathcal{AB}) \sigma_{-2 m}(D^{-2 m})(x_{0})\big] \sigma(\xi)\nonumber\\
	=&\frac{1}{8}\sum_{j,\hat{p},\hat{s},\hat{t}=1}^{2m}  {\operatorname{R}}_{j \hat{p} \hat{t} \hat{s}}{\rm tr}\big[c(u_1)c(u_2)c(dx_j)c(u_3)c(u_4)c(e_{\hat{p}})c(e_{\hat{s}})c(e_{\hat{t}})\big]{\rm Vol}(S^{n-1}).
\end{align}
By calculation, we have
\begin{align}\label{t2}
	&{\rm tr}\big[c(u_1)c(u_2)c(dx_j)c(u_3)c(u_4)c(e_{\hat{p}})c(e_{\hat{s}})c(e_{\hat{t}})\big]\nonumber\\
=&\sum_{j,\hat{p},\hat{s},\hat{t},r,f,p,q=1}^{2m}u_ru_fu_pu_q\times{\rm tr}\big[c(e_r)c(e_f)c(e_j)c(e_p)c(e_q)c(e_{\hat{p}})c(e_{\hat{s}})c(e_{\hat{t}})\big]	\nonumber\\ =&\sum_{j,\hat{p},\hat{s},\hat{t},r,f,p,q=1}^{2m}u_ru_fu_pu_q\Big[\delta_r^f\delta_j^p\delta_q^{\hat{p}}\delta_{\hat{s}}^{\hat{t}}-\delta_r^f\delta_j^p\delta_q^{\hat{s}}\delta_{\hat{p}}^{\hat{t}}+\delta_r^f\delta_j^p\delta_q^{\hat{t}}\delta_{\hat{p}}^{\hat{s}}-\delta_r^f\delta_j^q\delta_p^{\hat{p}}\delta_{\hat{s}}^{\hat{t}}+\delta_r^f\delta_j^q\delta_p^{\hat{s}}\delta_{\hat{p}}^{\hat{t}}
-\delta_r^f\delta_j^q\delta_p^{\hat{t}}\delta_{\hat{p}}^{\hat{s}}+\delta_r^f\delta_j^{\hat{p}}\delta_p^q\delta_{\hat{s}}^{\hat{t}}\nonumber\\
&-\delta_r^f\delta_j^{\hat{p}}\delta_p^{\hat{s}}\delta_q^{\hat{t}}+\delta_r^f\delta_j^{\hat{p}}\delta_p^{\hat{t}}\delta_q^{\hat{s}}-\delta_r^f\delta_j^{\hat{s}}\delta_p^q\delta_{\hat{p}}^{\hat{t}}+\delta_r^f\delta_j^{\hat{s}}\delta_p^{\hat{p}}\delta_q^{\hat{t}}-\delta_r^f\delta_j^{\hat{s}}\delta_p^{\hat{t}}\delta_q^{\hat{p}}
+\delta_r^f\delta_j^{\hat{t}}\delta_p^q\delta_{\hat{p}}^{\hat{s}}-\delta_r^f\delta_j^{\hat{t}}\delta_p^{\hat{p}}\delta_q^{\hat{s}}+\delta_r^f\delta_j^{\hat{t}}\delta_p^{\hat{s}}\delta_q^{\hat{p}}-\delta_r^j\delta_f^p\delta_q^{\hat{p}}\delta_{\hat{s}}^{\hat{t}}\nonumber\\
&+\delta_r^j\delta_f^p\delta_q^{\hat{s}}\delta_{\hat{p}}^{\hat{t}}-\delta_r^j\delta_f^p\delta_q^{\hat{t}}\delta_{\hat{p}}^{\hat{s}}+\delta_r^j\delta_f^q\delta_p^{\hat{p}}\delta_{\hat{s}}^{\hat{t}}-\delta_r^j\delta_f^q\delta_p^{\hat{s}}\delta_{\hat{p}}^{\hat{t}}+\delta_r^j\delta_f^q\delta_p^{\hat{t}}\delta_{\hat{p}}^{\hat{s}}
-\delta_r^j\delta_f^{\hat{p}}\delta_p^q\delta_{\hat{s}}^{\hat{t}}+\delta_r^j\delta_f^{\hat{p}}\delta_p^{\hat{s}}\delta_q^{\hat{t}}-\delta_r^j\delta_f^{\hat{p}}\delta_p^{\hat{t}}\delta_q^{\hat{s}}+\delta_r^j\delta_f^{\hat{s}}\delta_p^q\delta_{\hat{p}}^{\hat{t}}\nonumber\\
&-\delta_r^j\delta_f^{\hat{s}}\delta_p^{\hat{p}}\delta_q^{\hat{t}}+\delta_r^j\delta_f^{\hat{s}}\delta_p^{\hat{t}}\delta_q^{\hat{p}}-\delta_r^j\delta_f^{\hat{t}}\delta_p^q\delta_{\hat{p}}^{\hat{s}}+\delta_r^j\delta_f^{\hat{t}}\delta_p^{\hat{p}}\delta_q^{\hat{s}}-\delta_r^j\delta_f^{\hat{t}}\delta_p^{\hat{s}}\delta_q^{\hat{p}}
+\delta_r^p\delta_f^j\delta_q^{\hat{p}}\delta_{\hat{s}}^{\hat{t}}-\delta_r^p\delta_f^{\hat{p}}\delta_j^q\delta_{\hat{s}}^{\hat{t}}+\delta_r^p\delta_f^j\delta_q^{\hat{t}}\delta_{\hat{p}}^{\hat{s}}-\delta_r^p\delta_f^q\delta_j^{\hat{p}}\delta_{\hat{s}}^{\hat{t}}\nonumber\\
&+\delta_r^p\delta_f^q\delta_j^{\hat{s}}\delta_{\hat{p}}^{\hat{t}}-\delta_r^p\delta_f^q\delta_j^{\hat{t}}\delta_{\hat{p}}^{\hat{s}}+\delta_r^p\delta_f^{\hat{p}}\delta_j^q\delta_{\hat{s}}^{\hat{t}}-\delta_r^p\delta_f^{\hat{p}}\delta_j^{\hat{s}}\delta_q^{\hat{t}}+\delta_r^p\delta_f^{\hat{p}}\delta_j^{\hat{t}}\delta_q^{\hat{s}}
-\delta_r^p\delta_f^{\hat{s}}\delta_j^q\delta_{\hat{p}}^{\hat{t}}+\delta_r^p\delta_f^{\hat{s}}\delta_j^{\hat{p}}\delta_q^{\hat{t}}-\delta_r^p\delta_f^{\hat{s}}\delta_j^{\hat{t}}\delta_q^{\hat{p}}+\delta_r^p\delta_f^{\hat{t}}\delta_j^q\delta_{\hat{p}}^{\hat{s}}\nonumber\\
&-\delta_r^p\delta_f^{\hat{t}}\delta_j^{\hat{p}}\delta_q^{\hat{s}}+\delta_r^p\delta_f^{\hat{t}}\delta_j^{\hat{s}}\delta_q^{\hat{p}}-\delta_r^q\delta_f^j\delta_p^{\hat{p}}\delta_{\hat{s}}^{\hat{t}}+\delta_r^q\delta_f^{\hat{p}}\delta_j^p\delta_{\hat{s}}^{\hat{t}}-\delta_r^q\delta_f^j\delta_p^{\hat{t}}\delta_{\hat{p}}^{\hat{s}}
+\delta_r^q\delta_f^p\delta_j^{\hat{p}}\delta_{\hat{s}}^{\hat{t}}-\delta_r^q\delta_f^p\delta_j^{\hat{s}}\delta_{\hat{p}}^{\hat{t}}+\delta_r^q\delta_f^p\delta_j^{\hat{t}}\delta_{\hat{p}}^{\hat{s}}-\delta_r^q\delta_f^{\hat{p}}\delta_j^p\delta_{\hat{s}}^{\hat{t}}\nonumber\\
&+\delta_r^q\delta_f^{\hat{p}}\delta_j^{\hat{s}}\delta_p^{\hat{t}}-\delta_r^q\delta_f^{\hat{p}}\delta_j^{\hat{t}}\delta_p^{\hat{s}}+\delta_r^q\delta_f^{\hat{s}}\delta_j^p\delta_{\hat{p}}^{\hat{t}}-\delta_r^q\delta_f^{\hat{s}}\delta_j^{\hat{p}}\delta_p^{\hat{t}}+\delta_r^q\delta_f^{\hat{s}}\delta_j^{\hat{t}}\delta_p^{\hat{p}}
-\delta_r^q\delta_f^{\hat{t}}\delta_j^p\delta_{\hat{p}}^{\hat{s}}+\delta_r^q\delta_f^{\hat{t}}\delta_j^{\hat{p}}\delta_p^{\hat{s}}-\delta_r^q\delta_f^{\hat{t}}\delta_j^{\hat{s}}\delta_p^{\hat{p}}+\delta_r^{\hat{p}}\delta_f^j\delta_p^q\delta_{\hat{s}}^{\hat{t}}\nonumber\\
&-\delta_r^{\hat{p}}\delta_f^q\delta_j^p\delta_{\hat{s}}^{\hat{t}}+\delta_r^{\hat{p}}\delta_f^j\delta_p^{\hat{t}}\delta_q^{\hat{s}}-\delta_r^{\hat{p}}\delta_f^p\delta_j^q\delta_{\hat{s}}^{\hat{t}}+\delta_r^{\hat{p}}\delta_f^p\delta_j^{\hat{s}}\delta_q^{\hat{t}}-\delta_r^{\hat{p}}\delta_f^p\delta_j^{\hat{t}}\delta_q^{\hat{s}}
+\delta_r^{\hat{p}}\delta_f^q\delta_j^p\delta_{\hat{s}}^{\hat{t}}-\delta_r^{\hat{p}}\delta_f^q\delta_j^{\hat{s}}\delta_p^{\hat{t}}+\delta_r^{\hat{p}}\delta_f^q\delta_j^{\hat{t}}\delta_p^{\hat{s}}-\delta_r^{\hat{p}}\delta_f^{\hat{s}}\delta_j^p\delta_q^{\hat{t}}\nonumber\\
&+\delta_r^{\hat{p}}\delta_f^{\hat{s}}\delta_j^q\delta_p^{\hat{t}}-\delta_r^{\hat{p}}\delta_f^{\hat{s}}\delta_j^{\hat{t}}\delta_p^q+\delta_r^{\hat{p}}\delta_f^{\hat{t}}\delta_j^p\delta_q^{\hat{s}}-\delta_r^{\hat{p}}\delta_f^{\hat{t}}\delta_j^{\hat{q}}\delta_p^{\hat{s}}+\delta_r^{\hat{p}}\delta_f^{\hat{t}}\delta_j^{\hat{s}}\delta_p^q
-\delta_r^{\hat{s}}\delta_f^j\delta_p^q\delta_{\hat{p}}^{\hat{t}}+\delta_r^{\hat{s}}\delta_f^q\delta_j^p\delta_{\hat{p}}^{\hat{t}}-\delta_r^{\hat{s}}\delta_f^j\delta_p^{\hat{t}}\delta_q^{\hat{p}}+\delta_r^{\hat{s}}\delta_f^p\delta_j^q\delta_{\hat{p}}^{\hat{t}}\nonumber\\
&-\delta_r^{\hat{s}}\delta_f^p\delta_j^{\hat{p}}\delta_q^{\hat{t}}+\delta_r^{\hat{s}}\delta_f^p\delta_j^{\hat{t}}\delta_q^{\hat{p}}-\delta_r^{\hat{s}}\delta_f^q\delta_j^p\delta_{\hat{p}}^{\hat{t}}+\delta_r^{\hat{s}}\delta_f^q\delta_j^{\hat{p}}\delta_p^{\hat{t}}-\delta_r^{\hat{s}}\delta_f^q\delta_j^{\hat{t}}\delta_p^{\hat{p}}
+\delta_r^{\hat{s}}\delta_f^{\hat{p}}\delta_j^p\delta_q^{\hat{t}}-\delta_r^{\hat{s}}\delta_f^{\hat{p}}\delta_j^q\delta_p^{\hat{t}}+\delta_r^{\hat{s}}\delta_f^{\hat{p}}\delta_j^{\hat{t}}\delta_p^q-\delta_r^{\hat{s}}\delta_f^{\hat{t}}\delta_j^p\delta_q^{\hat{p}}\nonumber\\
&+\delta_r^{\hat{s}}\delta_f^{\hat{t}}\delta_j^q\delta_p^{\hat{p}}-\delta_r^{\hat{s}}\delta_f^{\hat{t}}\delta_j^{\hat{p}}\delta_p^q+\delta_r^{\hat{t}}\delta_f^j\delta_p^q\delta_{\hat{p}}^{\hat{s}}-\delta_r^{\hat{t}}\delta_f^q\delta_j^p\delta_{\hat{p}}^{\hat{s}}+\delta_r^{\hat{t}}\delta_f^j\delta_p^{\hat{s}}\delta_q^{\hat{p}}
-\delta_r^{\hat{t}}\delta_f^p\delta_j^q\delta_{\hat{p}}^{\hat{s}}+\delta_r^{\hat{t}}\delta_f^p\delta_j^{\hat{p}}\delta_q^{\hat{s}}-\delta_r^{\hat{t}}\delta_f^p\delta_j^{\hat{s}}\delta_q^{\hat{p}}+\delta_r^{\hat{t}}\delta_f^q\delta_j^p\delta_{\hat{p}}^{\hat{s}}\nonumber\\
&-\delta_r^{\hat{t}}\delta_f^q\delta_j^{\hat{p}}\delta_p^{\hat{s}}+\delta_r^{\hat{t}}\delta_f^q\delta_j^{\hat{s}}\delta_p^{\hat{p}}-\delta_r^{\hat{t}}\delta_f^{\hat{p}}\delta_j^p\delta_q^{\hat{s}}+\delta_r^{\hat{t}}\delta_f^{\hat{p}}\delta_j^q\delta_p^{\hat{s}}-\delta_r^{\hat{t}}\delta_f^{\hat{p}}\delta_j^{\hat{s}}\delta_p^q
+\delta_r^{\hat{t}}\delta_f^{\hat{s}}\delta_j^p\delta_q^{\hat{p}}-\delta_r^{\hat{t}}\delta_f^{\hat{s}}\delta_j^q\delta_p^{\hat{p}}+\delta_r^{\hat{t}}\delta_f^{\hat{s}}\delta_j^{\hat{p}}\delta_p^q
\Big]{\rm tr}[id],
\end{align}
then
\begin{align}
	&\sum_{j,\hat{p},\hat{s},\hat{t}=1}^{2m}  {\operatorname{R}}_{j \hat{p} \hat{t} \hat{s}}{\rm tr}\big[c(u_1)c(u_2)c(dx_j)c(u_3)c(u_4)c(e_{\hat{p}})c(e_{\hat{s}})c(e_{\hat{t}})\big]\nonumber\\
=&\Big(2sg(u_1,u_2)g(u_3,u_4)+4g(u_1,u_2){\rm Ric}(u_3,u_4)-g(u_3,u_4){\rm Ric}(u_1,u_2)+3g(u_1,u_3){\rm Ric}(u_2,u_4)\nonumber\\
&+g(u_2,u_4){\rm Ric}(u_1,u_3)+3g(u_1,u_4){\rm Ric}(u_2,u_3)-2g(u_2,u_3){\rm Ric}(u_1,u_4)\Big){\rm tr}[id].
\end{align}
Therefore, we get
\begin{align}
	&\int_{\|\xi\|=1} \operatorname{tr}\big[\sigma_{0}(\mathcal{AB}) \sigma_{-2 m}(D^{-2 m})(x_{0})\big] \sigma(\xi)\nonumber\\
	=&\frac{1}{8}\Big(2sg(u_1,u_2)g(u_3,u_4)+4g(u_1,u_2){\rm Ric}(u_3,u_4)-g(u_3,u_4){\rm Ric}(u_1,u_2)+3g(u_1,u_3){\rm Ric}(u_2,u_4)\nonumber\\
&+g(u_2,u_4){\rm Ric}(u_1,u_3)+3g(u_1,u_4){\rm Ric}(u_2,u_3)-2g(u_2,u_3){\rm Ric}(u_1,u_4)\Big){\rm Vol}(S^{n-1}){\rm tr}[id].
\end{align}

\noindent{\bf (II-2)} For $\sigma_{1}(\mathcal{AB}) \sigma_{-2 m-1}(D^{-2 m})(x_{0})$:\\

\noindent Obviously, we have
\begin{align}
	&\sigma_{1}(\mathcal{AB}) \sigma_{-2 m-1}(D^{-2 m})(x_{0})=0,\nonumber
\end{align}
so
\begin{align}
	&\int_{\|\xi\|=1} \operatorname{tr}\big[\sigma_{1}(\mathcal{AB}) \sigma_{-2 m-1}(D^{-2 m})(x_{0})\big] \sigma(\xi)=0.\nonumber
\end{align}

\noindent{\bf (II-3)} For $\sigma_{2}(\mathcal{AB}) \sigma_{-2 m-2}(D^{-2 m})(x_{0})$:
\begin{align}\label{2-2m-2}
&\sigma_{2}(\mathcal{AB}) \sigma_{-2 m-2}(D^{-2 m})(x_{0})\nonumber\\
=&-\frac{m(m+1)}{3}\|\xi\|^{-2 m-4}\sum_{a,b=1}^{2m} \operatorname{Ric}_{a b} \xi_{a} \xi_{b}c(u_1)c(u_2)c(\xi)c(u_3)c(u_4)c(\xi) \nonumber\\
&+\frac{ m (m+1)}{4}\|\xi\|^{-2 m-4} \sum_{a,b,s,t=1}^{2m}\operatorname{R}_{b a t s} \xi_{a} \xi_{b}c(u_1)c(u_2)c(\xi)c(u_3)c(u_4)c(\xi)c(e_s) c(e_t)\nonumber\\
&+\frac{m}{4}\|\xi\|^{-2 m-2}sc(u_1)c(u_2)c(\xi)c(u_3)c(u_4)c(\xi).
\end{align}

\noindent{\bf (II-3-$\mathbb{A}$)}Due to
\begin{align}\label{t7}
	{\rm tr}\big[c(u_1)c(u_2)c(\xi)c(u_3)c(u_4)c(\xi)\big]=\sum_{f,g=1}^{2m}\xi_f\xi_g{\rm tr}\big[c(u_1)c(u_2)c(e_f)c(u_3)c(u_4)c(e_g)\big],
\end{align}
then
\begin{align}\label{a5}
	&\int_{\|\xi\|=1} \operatorname{tr}\biggl\{-\frac{m(m+1)}{3}\|\xi\|^{-2 m-4}\sum_{a,b=1}^{2m} \operatorname{Ric}_{a b} \xi_{a} \xi_{b} c(u_1)c(u_2)c(\xi)c(u_3)c(u_4)c(\xi)\biggr\}(x_0)\sigma(\xi)\nonumber\\
=&-\frac{m(m+1)}{3}\times\frac{1}{2m(2m+2)}\sum_{a,b,f,g=1}^{2m}\operatorname{Ric}_{a b}\big(\delta_a^b\delta_f^g+\delta_a^f\delta_b^g+\delta_a^g\delta_b^f\big)\nonumber\\
&\times{\rm tr}\big[c(u_1)c(u_2)c(e_f)c(u_3)c(u_4)c(e_g)\big]{\rm Vol}(S^{n-1})\nonumber\\
=&-\frac{1}{12}\sum_{l,a,f=1}^{2m}R(e_l,e_a,e_l,e_a){\rm tr}\big[c(u_1)c(u_2)c(e_f)c(u_3)c(u_4)c(e_f)\big]{\rm Vol}(S^{n-1})\nonumber\\
&-\frac{1}{12}\sum_{l,a,b=1}^{2m}R(e_l,e_a,e_l,e_b){\rm tr}\big[c(u_1)c(u_2)c(e_a)c(u_3)c(u_4)c(e_b)\big]{\rm Vol}(S^{n-1})\nonumber\\
&-\frac{1}{12}\sum_{l,a,b=1}^{2m}R(e_l,e_a,e_l,e_b){\rm tr}\big[c(u_1)c(u_2)c(e_b)c(u_3)c(u_4)c(e_a)\big]{\rm Vol}(S^{n-1}).
\end{align}
By \eqref{a1} and straightforward computations yield
\begin{align}\label{a2}
&{\rm tr}\big[c(u_1)c(u_2)c(e_f)c(u_3)c(u_4)c(e_f)\big]\nonumber\\
=&u_ru_{f_1}u_pu_q \times {\rm tr}\big[c(e_r)c(e_{f_1})c(e_f)c(e_p)c(e_q)c(e_f)\big]\nonumber\\
=&u_ru_{f_1}u_pu_q \times \Big(-4g(e_r,e_{f_1})g(e_p,e_q){\rm tr}[id]+(2m+4){\rm tr}\big[c(e_r)c(e_{f_1})c(e_p)c(e_q)\big]\Big)\nonumber\\
=&-4g(u_1,u_2)g(u_3,u_4){\rm tr}[id]+(2m+4){\rm tr}\big[c(u_1)c(u_2)c(u_3)c(u_4)\big]\nonumber\\
=&\Big(2mg(u_1,u_2)g(u_3,u_4)-(2m+4)g(u_1,u_3)g(u_2,u_4)+(2m+4)g(u_1,u_4)g(u_2,u_3)\Big){\rm tr}[id],
\end{align}
and
\begin{align}\label{a3}
&{\rm tr}\big[c(u_1)c(u_2)c(e_a)c(u_3)c(u_4)c(e_b)\big]\nonumber\\
=&u_ru_fu_pu_q \times {\rm tr}\big[c(e_r)c(e_f)c(e_a)c(e_p)c(e_q)c(e_b)\big]\nonumber\\
=&u_ru_fu_pu_q \times \Big(-\delta_r^f\delta_a^p\delta_q^b+\delta_r^f\delta_a^q\delta_p^b-\delta_r^f\delta_a^b\delta_p^q+\delta_r^a\delta_f^p\delta_q^b-\delta_r^a\delta_f^q\delta_p^b
+\delta_r^a\delta_f^b\delta_p^q-\delta_r^p\delta_f^a\delta_q^b+\delta_r^p\delta_f^q\delta_a^b\nonumber\\
&-\delta_r^p\delta_f^b\delta_a^q+\delta_r^q\delta_f^a\delta_p^b
-\delta_r^q\delta_f^p\delta_a^b+\delta_r^q\delta_f^b\delta_a^p-\delta_r^b\delta_f^a\delta_p^q+\delta_r^b\delta_f^p\delta_a^q-\delta_r^b\delta_f^q\delta_a^p\Big){\rm tr}[id]\nonumber\\
=&\Big(-\delta_a^bu_ru_fu_pu_q+2\big(u_fu_p-u_fu_q-u_ru_p+u_ru_q\big)u_au_b\Big){\rm tr}[id].
\end{align}
Similarly, we have
\begin{align}\label{a4}
&{\rm tr}\big[c(u_1)c(u_2)c(e_b)c(u_3)c(u_4)c(e_a)\big]\nonumber\\
=&\Big(-\delta_a^bu_ru_fu_pu_q+2\big(u_fu_p-u_fu_q-u_ru_p+u_ru_q\big)u_au_b\Big){\rm tr}[id].
\end{align}
Substituting \eqref{a2}-\eqref{a4} into \eqref{a5}, we get
\begin{align}
	&\int_{\|\xi\|=1} \operatorname{tr}\biggl\{-\frac{m(m+1)}{3}\|\xi\|^{-2 m-4}\sum_{a,b=1}^{2m} \operatorname{Ric}_{a b} \xi_{a} \xi_{b} c(u_1)c(u_2)c(\xi)c(u_3)c(u_4)c(\xi)\biggr\}(x_0)\sigma(\xi)\nonumber\\
=&\bigg[-\frac{m-1}{6}sg(u_1,u_2)g(u_3,u_4)+\frac{m+2}{6}s\Big(g(u_1,u_3)g(u_2,u_4)-g(u_1,u_4)g(u_2,u_3)\Big)\nonumber\\
&-\frac{1}{3}\Big(g(u_2,u_3){\rm Ric}(u_1,u_4)-g(u_2,u_4){\rm Ric}(u_1,u_3)-g(u_1,u_3){\rm Ric}(u_2,u_4)\nonumber\\
&+g(u_1,u_4){\rm Ric}(u_2,u_3)\Big)\bigg]{\rm Vol}(S^{n-1}){\rm tr}[id].
\end{align}

\noindent{\bf (II-3-$\mathbb{B}$)}

\begin{align}
	&\int_{\|\xi\|=1}\operatorname{tr} \biggl\{\frac{ m (m+1)}{4}\|\xi\|^{-2 m-4} \sum_{a,b,s,t=1}^{2m}\operatorname{R}_{b a t s} \xi_{a} \xi_{b}c(u_1)c(u_2)c(\xi)c(u_3)c(u_4)c(\xi)c(e_s) c(e_t) \biggr\}(x_0)\sigma(\xi)\nonumber\\
=&\frac{ m (m+1)}{4}\times\frac{1}{2m(2m+2)}\sum_{a,b,f,g,s,t=1}^{2m}\operatorname{R}_{b a t s} \big(\delta_a^b\delta_f^g+\delta_a^f\delta_b^g+\delta_a^g\delta_b^f\big) \nonumber\\
&\times{\rm tr}\big[c(u_1)c(u_2)c(e_f)c(u_3)c(u_4)c(e_g)c(e_s) c(e_t)\big]{\rm Vol}(S^{n-1})\nonumber\\
=&\frac{1}{16}\sum_{a,b,s,t=1}^{2m}\operatorname{R}_{b a t s} {\rm tr}\big[c(u_1)c(u_2)c(e_a)c(u_3)c(u_4)c(e_b)c(e_s) c(e_t)\big]{\rm Vol}(S^{n-1}) \nonumber\\
&+\frac{1}{16}\sum_{a,b,s,t=1}^{2m}\operatorname{R}_{b a t s} {\rm tr}\big[c(u_1)c(u_2)c(e_b)c(u_3)c(u_4)c(e_a)c(e_s) c(e_t)\big]{\rm Vol}(S^{n-1}) \nonumber\\
	=&0.\nonumber
\end{align}

\noindent{\bf (II-3-$\mathbb{C}$)}
By \eqref{a2}, we get
\begin{align}
	&\int_{\|\xi\|=1}\operatorname{tr} \biggl\{\frac{m}{4}\|\xi\|^{-2 m-2}sc(u_1)c(u_2)c(\xi)c(u_3)c(u_4)c(\xi)\biggr\}(x_0)\sigma(\xi)\nonumber\\
	=&\int_{\|\xi\|=1}\operatorname{tr} \biggl\{\frac{m}{4}\|\xi\|^{-2 m-2}s\xi_{f} \xi_{g}c(u_1)c(u_2)c(e_f)c(u_3)c(u_4)c(e_g)\biggr\}(x_0)\sigma(\xi)\nonumber\\
	=&\frac{1}{8}s{\rm tr}\big[c(u_1)c(u_2)c(e_f)c(u_3)c(u_4)c(e_f)\big]{\rm Vol}(S^{n-1}) \nonumber\\
	=&\bigg[\frac{m}{4}sg(u_1,u_2)g(u_3,u_4)-\frac{m+2}{4}s\Big(g(u_1,u_3)g(u_2,u_4)-g(u_1,u_4)g(u_2,u_3)\Big)\bigg]{\rm tr}[id].
\end{align}
As a result, we obtain
\begin{align}
	&\int_{\|\xi\|=1} \operatorname{tr}\big[\sigma_{2}(\mathcal{AB}) \sigma_{-2 m-2}(D^{-2 m})(x_{0})\big] \sigma(\xi)\nonumber\\
	=&\bigg\{\frac{m+2}{12}s\Big(g(u_1,u_2)g(u_3,u_4)-g(u_1,u_3)g(u_2,u_4)-g(u_1,u_4)g(u_2,u_3)\Big)-\frac{1}{3}\Big(g(u_2,u_3){\rm Ric}(u_1,u_4)\nonumber\\
&-g(u_2,u_4){\rm Ric}(u_1,u_3)-g(u_1,u_3){\rm Ric}(u_2,u_4)+g(u_1,u_4){\rm Ric}(u_2,u_3)\Big)\bigg\}{\rm Vol}(S^{n-1}){\rm tr}[id].
\end{align}

\noindent{\bf (II-4)} For $-i\sum_{j=1}^{2m} \partial_{\xi_{j}}\big[\sigma_{2}(\mathcal{A} \mathcal{B})\big] \partial_{x_{j}}\big[\sigma_{-2 m-1}(D^{-2 m})\big](x_{0})$:

\begin{align}\label{2-2m-1}
	&-i\sum_{j=1}^{2m} \partial_{\xi_{j}}[\sigma_{2}(\mathcal{A} \mathcal{B})] \partial_{x_{j}}[\sigma_{-2 m-1}(D^{-2 m})](x_{0})\nonumber\\
	=&\;\;\frac{2 m }{3}\|\xi\|^{-2 m-2}\sum_{a,b,j=1}^{2m}  \operatorname{Ric}_{a b}  \xi_{a}\delta^{b}_{j} \Big(c(u_1)c(u_2)c(dx_j)c(u_3)c(u_4)c(\xi)\nonumber\\
&+c(u_1)c(u_2)c(\xi)c(u_3)c(u_4)c(dx_j)\Big)\nonumber\\
	&-\frac{ m }{4}\|\xi\|^{-2 m-2} \sum_{a,b,j,s,t=1}^{2m} \operatorname{R}_{b a t s}\xi_{a}\delta^{b}_{j}\Big(c(u_1)c(u_2)c(dx_j)c(u_3)c(u_4)c(\xi)\nonumber\\
&+c(u_1)c(u_2)c(\xi)c(u_3)c(u_4)c(dx_j) \Big) c(e_s) c(e_t).
\end{align}

\noindent{\bf (II-4-$\mathbb{A}$)}
By \eqref{a4} and straightforward computations yield
\begin{align}
	&\int_{\|\xi\|=1} {\rm tr}\biggl\{\frac{2 m }{3}\|\xi\|^{-2 m-2}\sum_{a,b,j=1}^{2m}  \operatorname{Ric}_{a b}  \xi_{a}\delta^{b}_{j}\Big(c(u_1)c(u_2)c(dx_j)c(u_3)c(u_4)c(\xi)\nonumber\\
&+c(u_1)c(u_2)c(\xi)c(u_3)c(u_4)c(dx_j)\Big)\biggr\}(x_0)\sigma(\xi)\nonumber\\
	=&\int_{\|\xi\|=1}{\rm tr} \biggl\{\frac{2 m }{3}\|\xi\|^{-2 m-2}\sum_{a,b,g=1}^{2m}  \operatorname{Ric}_{a b}  \xi_{a}\xi_{g}\Big(c(u_1)c(u_2)c(e_b)c(u_3)c(u_4)c(e_g)\nonumber\\
&+c(u_1)c(u_2)c(e_g)c(u_3)c(u_4)c(e_b)\Big)\biggr\}(x_0)\sigma(\xi)\nonumber\\
	=&\frac{1}{3}\sum_{a,b=1}^{2m}  \operatorname{Ric}_{a b}  {\rm tr}\big[c(u_1)c(u_2)c(e_b)c(u_3)c(u_4)c(e_a)+c(u_1)c(u_2)c(e_a)c(u_3)c(u_4)c(e_b)\big]{\rm Vol}(S^{n-1})\nonumber\\
	=&\frac{2}{3}\sum_{a,b=1}^{2m}  \operatorname{Ric}_{a b}  {\rm tr}\big[c(u_1)c(u_2)c(e_b)c(u_3)c(u_4)c(e_a)\big]{\rm Vol}(S^{n-1})\nonumber\\
	=&\bigg[-\frac{2}{3}sg(u_1,u_2)g(u_3,u_4)+\frac{4}{3}\Big(g(u_2,u_3){\rm Ric}(u_1,u_4)-g(u_2,u_4){\rm Ric}(u_1,u_3)\nonumber\\
&-g(u_1,u_3){\rm Ric}(u_2,u_4)+g(u_1,u_4){\rm Ric}(u_2,u_3)\Big)\bigg]{\rm Vol}(S^{n-1}){\rm tr}[id].
\end{align}

\noindent{\bf (II-4-$\mathbb{B}$)}

\begin{align}
	&\int_{\|\xi\|=1}\operatorname{tr} \biggl\{-\frac{ m }{4}\|\xi\|^{-2 m-2} \sum_{a,b,j,s,t=1}^{2m} \operatorname{R}_{b a t s}\xi_{a}\delta^{b}_{j}\Big(c(u_1)c(u_2)c(dx_j)c(u_3)c(u_4)c(\xi)\nonumber\\
&+c(u_1)c(u_2)c(\xi)c(u_3)c(u_4)c(dx_j)\Big) c(e_s) c(e_t)\biggr\}(x_0)\sigma(\xi)\nonumber\\
	=&\int_{\|\xi\|=1}\operatorname{tr} \biggl\{-\frac{ m }{4}\|\xi\|^{-2 m-2} \sum_{a,b,s,t,g=1}^{2m} \operatorname{R}_{b a t s}\xi_{a}\xi_{g}\Big(c(u_1)c(u_2)c(e_b)c(u_3)c(u_4)c(e_g)\nonumber\\
&+c(u_1)c(u_2)c(e_g)c(u_3)c(u_4)c(e_b) \Big)c(e_s) c(e_t)\biggr\}(x_0)\sigma(\xi)\nonumber\\
=&-\frac{1 }{8}\sum_{a,b,s,t=1}^{2m} \operatorname{R}_{b a t s}\operatorname{tr}\big[c(u_1)c(u_2)c(e_b)c(u_3)c(u_4)c(e_a) c(e_s) c(e_t)\big]{\rm Vol}(S^{n-1})\nonumber\\
&-\frac{1 }{8}\sum_{a,b,s,t=1}^{2m} \operatorname{R}_{ab t s}\operatorname{tr}\big[c(u_1)c(u_2)c(e_b)c(u_3)c(u_4)c(e_a) c(e_s) c(e_t)\big]{\rm Vol}(S^{n-1})\nonumber\\
	=&0.\nonumber
\end{align}
In summary, we get
\begin{align}
	&\int_{\|\xi\|=1} \operatorname{tr}\bigg[-i \sum_{j=1}^{2m} \partial_{\xi_{j}}\big[\sigma_{2}(\mathcal{A} \mathcal{B})\big] \partial_{x_{j}}\big[\sigma_{-2 m-1}(D^{-2 m})\big]\bigg] (x_0)\sigma(\xi)\nonumber\\
	=&\bigg[-\frac{2}{3}sg(u_1,u_2)g(u_3,u_4)+\frac{4}{3}\Big(g(u_2,u_3){\rm Ric}(u_1,u_4)-g(u_2,u_4){\rm Ric}(u_1,u_3)\nonumber\\
&-g(u_1,u_3){\rm Ric}(u_2,u_4)+g(u_1,u_4){\rm Ric}(u_2,u_3)\Big)\bigg]{\rm Vol}(S^{n-1}){\rm tr}[id].
\end{align}

\noindent{\bf (II-5)} For $-i \sum_{j=1}^{2m} \partial_{\xi_{j}}\big[\sigma_{1}(\mathcal{A} \mathcal{B})\big] \partial_{x_{j}}\big[\sigma_{-2 m}(D^{-2 m})\big](x_{0})$:
\begin{align}\label{1-2m}
	\partial_{x_{j}}\big[\sigma_{-2 m}(D^{-2 m})\big](x_0)
	=&\partial_{x_{j}}\bigg[\|\xi\|^{-2 m-2}\sum_{a,b,l,k=1}^{2m} \left(\delta_{a b}-\frac{m}{3} R_{a l b k} x^{l} x^{k}\right) \xi_{a} \xi_{b}\bigg](x_0)\nonumber\\
	=&0,\nonumber
\end{align}
so
\begin{align}
	&\int_{\|\xi\|=1} \operatorname{tr}\bigg[-i\sum_{j=1}^{2m} \partial_{\xi_{j}}\big[\sigma_{1}(\mathcal{A} \mathcal{B})\big] \partial_{x_{j}}\big[\sigma_{-2 m}(D^{-2 m})\big]\bigg] (x_0)\sigma(\xi)=0.\nonumber
\end{align}

\noindent{\bf (II-6)} For $-\frac{1}{2} \sum_{j ,l=1}^{2m} \partial_{\xi_{j}} \partial_{\xi_{l}}\big[\sigma_{2}(\mathcal{A} \mathcal{B})\big] \partial_{x_{j}} \partial_{x_{l}}\big[\sigma_{-2 m}(D^{-2 m})\big](x_{0})$:

\begin{align}
\sum_{j, l=1} ^{2m}\partial_{\xi_{j}} \partial_{\xi_{l}}\big[\sigma_{2}(\mathcal{A} \mathcal{B})\big](x_{0})
=&\sum_{j, l=1} ^{2m}\partial_{\xi_{j}} \partial_{\xi_{l}}\big[-c(u_1)c(u_2)c(\xi)c(u_3)c(u_4)c(\xi)\big]\nonumber\\
=&-\sum_{j, l=1} ^{2m}\big[c(u_1)c(u_2)c(dx_l)c(u_3)c(u_4)c(dx_j)\nonumber\\
&+c(u_1)c(u_2)c(dx_j)c(u_3)c(u_4)c(dx_l)\big],
\end{align}
and
\begin{align}
\sum_{j, l=1} ^{2m}\partial_{x_{j}} \partial_{x_{l}}\big[\sigma_{-2 m}(D^{-2 m})\big](x_{0})
&=\sum_{j, l=1} ^{2m}\partial_{x_{j}} \partial_{x_{l}}\bigg[\|\xi\|^{-2 m-2}\sum_{a,b,\hat{j},k=1}^{2m} \left(\delta_{a b}-\frac{m}{3} R_{a \hat{j} b k} x^{\hat{j}} x^{k}\right) \xi_{a} \xi_{b}\bigg]\nonumber\\
&=-\frac{m}{3}\|\xi\|^{-2 m-2}\sum_{j, l,a,b,\hat{j},k=1} ^{2m}\Big(R_{a\hat{j}bk}\delta_l^{\hat{j}}\delta_j^k+R_{a\hat{j}bk}\delta_l^k\delta_j^{\hat{j}}\Big)\xi_a\xi_b.
\end{align}
So
\begin{align}\label{1-2m}
	&-\frac{1}{2} \sum_{j, l=1} ^{2m}\partial_{\xi_{j}} \partial_{\xi_{l}}\big[\sigma_{2}(\mathcal{A} \mathcal{B})\big] \partial_{x_{j}} \partial_{x_{l}}\big[\sigma_{-2 m}(D^{-2 m})\big](x_{0})\nonumber\\
	=&-\frac{m}{6}\|\xi\|^{-2 m-2}\sum_{j, l,a,b,\hat{j},k=1} ^{2m}\Big(R_{a\hat{j}bk}\delta_l^{\hat{j}}\delta_j^k+R_{a\hat{j}bk}\delta_l^k\delta_j^{\hat{j}}\Big)\xi_a\xi_bc(u_1)c(u_2)c(dx_l)c(u_3)c(u_4)c(dx_j)\nonumber\\
&-\frac{m}{6}\|\xi\|^{-2 m-2}\sum_{j, l,a,b,\hat{j},k=1} ^{2m}\Big(R_{a\hat{j}bk}\delta_l^{\hat{j}}\delta_j^k+R_{a\hat{j}bk}\delta_l^k\delta_j^{\hat{j}}\Big)\xi_a\xi_bc(u_1)c(u_2)c(dx_j)c(u_3)c(u_4)c(dx_l),
\end{align}
then
\begin{align}
	&\int_{\|\xi\|=1} {\rm tr}\biggl\{-\frac{1}{2} \sum_{j, l=1} ^{2m}\partial_{\xi_{j}} \partial_{\xi_{l}}\big[\sigma_{2}(\mathcal{A} \mathcal{B})\big] \partial_{x_{j}} \partial_{x_{l}}\big[\sigma_{-2 m}(D^{-2 m})\big]\biggr\}(x_0)\sigma(\xi)\nonumber\\
	=&\int_{\|\xi\|=1} \biggl\{-\frac{m}{6}\|\xi\|^{-2 m-2}\sum_{a,b,j,l=1}^{2m}\Big({\rm R}_{albj}+{\rm R}_{ajbl} \Big)\xi_{a}\xi_{b}{\rm tr}\big[c(u_1)c(u_2)c(dx_l)c(u_3)c(u_4)c(dx_j)\big]\biggr\}(x_0)\sigma(\xi)\nonumber\\
&+\int_{\|\xi\|=1} \biggl\{-\frac{m}{6}\|\xi\|^{-2 m-2}\sum_{a,b,j,l=1}^{2m}\Big({\rm R}_{albj}+{\rm R}_{ajbl} \Big)\xi_{a}\xi_{b}{\rm tr}\big[c(u_1)c(u_2)c(dx_j)c(u_3)c(u_4)c(dx_l)\big]\biggr\}(x_0)\sigma(\xi)\nonumber\\
	=&-\frac{1}{6}\sum_{a,j,l=1}^{2m}{\rm R}_{alaj}{\rm tr}\big[c(u_1)c(u_2)c(dx_l)c(u_3)c(u_4)c(dx_j)+c(u_1)c(u_2)c(dx_j)c(u_3)c(u_4)c(dx_l)\big]{\rm Vol}(S^{n-1})\nonumber\\
	=&\bigg[\frac{1}{3}sg(u_1,u_2)g(u_3,u_4)-\frac{2}{3}\Big(g(u_2,u_3){\rm Ric}(u_1,u_4)-g(u_2,u_4){\rm Ric}(u_1,u_3)-g(u_1,u_3){\rm Ric}(u_2,u_4)\nonumber\\
&+g(u_1,u_4){\rm Ric}(u_2,u_3)\Big)\bigg]{\rm Vol}(S^{n-1}){\rm tr}[id].
\end{align}
Thus, by summing {\bf (II-1)} to {\bf (II-6)}, we get
\begin{align}\label{zabdt}
	&\int_{\|\xi\|=1} {\rm tr}\biggl\{	\sigma_{-2 m}(\mathcal{A} \mathcal{B} D^{-2 m})\biggr\}(x_0)\sigma(\xi)\nonumber\\
	=&\frac{1}{24}\bigg\{2(m+1)sg(u_1,u_2)g(u_3,u_4)-2(m+2)s\Big(g(u_1,u_3)g(u_2,u_4)-g(u_1,u_4)g(u_2,u_3)\Big)\nonumber\\
&+12g(u_1,u_2){\rm Ric}(u_3,u_4)-3g(u_3,u_4){\rm Ric}(u_1,u_2)+g(u_1,u_3){\rm Ric}(u_2,u_4)-5g(u_2,u_4){\rm Ric}(u_1,u_3)\nonumber\\
&+17g(u_1,u_4){\rm Ric}(u_2,u_3)+2g(u_2,u_3){\rm Ric}(u_1,u_4)\bigg\} {\rm Vol}(S^{n-1}){\rm tr}[id].
\end{align}
Further, we obtain
\begin{align}\label{z22}
	\mathscr{Q}_{D}=&\mathrm{Wres}\Big(c(u_1)c(u_2)Dc(u_3)c(u_4)DD^{-2m}\Big)\nonumber\\
	=&2^{m} \frac{2 \pi^{m}}{\Gamma\left(m\right)}\int_{M}\frac{1}{24}\bigg\{2(m+1)sg(u_1,u_2)g(u_3,u_4)-2(m+2)s\Big(g(u_1,u_3)g(u_2,u_4)-g(u_1,u_4)g(u_2,u_3)\Big)\nonumber\\
&+12g(u_1,u_2){\rm Ric}(u_3,u_4)-3g(u_3,u_4){\rm Ric}(u_1,u_2)+g(u_1,u_3){\rm Ric}(u_2,u_4)-5g(u_2,u_4){\rm Ric}(u_1,u_3)\nonumber\\
&+17g(u_1,u_4){\rm Ric}(u_2,u_3)+2g(u_2,u_3){\rm Ric}(u_1,u_4)\bigg\}d{\rm Vol}_M.
\end{align}
Hence, by \eqref{z11} and \eqref{z22}, Theorem \ref{thm} holds.
\end{proof}

Let $(\mathcal{A},\mathcal{H},D)$ be an $n$-summable unital non-self-adjoint spectral triple, where $\mathcal{A}$ is a noncommutative algebra with involution, acting in the Hilbert space $\mathcal{H}$ while $D$ is a Dirac operator, which maybe non-self-adjoint operator and such that
$[D,a]$ is bounded $\forall a \in \mathcal{A}$. We assume that there exists a generalised algebra of
pseudo-differential operators, which contains $\mathcal{A},D,$ $D^l$ for $l\in \mathbb{Z}$ and there exists a tracial state $\mathcal{W}$  on it, called a noncommutative residue. Moreover, we assume that the noncommutative residue identically vanishes on $TD^{-k}$ for any $k>2m$ and a zero-order operator $T$.

\begin{defn}
For a regular spectral triple, we define spectral (0,4)-tensor functionals by
\begin{align}
&\mathrm{Wres}\Big(c(u_1)c(u_2)c(u_3)c(u_4)D^{-2m+2}\Big);\nonumber\\
&\mathrm{Wres}\Big(c(u_1)c(u_2)Dc(u_3)c(u_4)DD^{-2m}\Big).\nonumber
\end{align}
\end{defn}

\begin{rem}
1)It is interesting to compute above spectral (0,4)-tensor functionals for the noncommutative spectral triple, for examples the noncommutative torus and almost commutative spectral triple.\\
2)It is interesting to compute similar functionals
\begin{align}
&\mathrm{Wres}\Big(c(u_1)Dc(u_2)c(u_3)c(u_4)D^{-2m+1}\Big);\nonumber\\
&\mathrm{Wres}\Big(c(u_1)c(u_2)c(u_3)Dc(u_4)D^{-2m+1}\Big);\nonumber\\
&\mathrm{Wres}\Big(c(u_1)c(u_2)Dc(u_3)Dc(u_4)D^{-2m}\Big).\nonumber
\end{align}
\end{rem}

\section*{Declarations}
\textbf{Conflict of interest} The authors declare no conflicts of interest.

\section*{Acknowledgements}
This work was supported by the National Natural Science Foundation of China (No.11771070).

\end{document}